\tikzset{node distance=3cm, auto}
\newtheorem{theorem}{Theorem}[section]
\theoremstyle{plain}
\newtheorem{corollary}[theorem]{Corollary}
\newtheorem{definition}[theorem]{Definition}
\newtheorem{example}[theorem]{Example}
\newtheorem{lemma}[theorem]{Lemma}
\newtheorem{proposition}[theorem]{Proposition}
\numberwithin{equation}{section}
\begin{document}
\title[On quotients of ideals of weighted holomorphic mappings]{On quotients of
ideals of weighted holomorphic mappings}
\author[A. Belacel]{A. Belacel}
\address[Amar Belacel]{Laboratory of Pure and Applied Mathematics (LPAM),
University of Laghouat, Laghouat, Algeria.}
\email{amarbelacel@yahoo.fr}
\author[A. Bougoutaia]{A. Bougoutaia}
\address[Amar Bougoutaia]{Laboratory of Pure and Applied Mathematics (LPAM),
University of Laghouat, Laghouat, Algeria.}
\email{amarbou28@gmail.com}
\author[P. Rueda]{P. Rueda}
\address[Pilar Rueda]{Departamento de An\'alisis Matem\'{a}tico, Universitat de Val\`encia, C/ Dr. Moliner 50, 46100  Burjassot (Valencia), Spain}
\email{pilar.rueda@uv.es}
\subjclass[2020]{46E15, 46B40, 46G20, 47L20}
\keywords{operator ideals, weighted Banach spaces, left-hand quotients, holomorphic mappings}

\begin{abstract}
We explore the procedure given by left-hand quotients in the context  of weighted holomorphic  ideals. On the one hand, we show that this procedure does  not generate new ideals other than the  ideal of weighted holomorphic mappings when considering the left-hand quotients induced by the ideals of  $p$-compact, weakly $p$-compact, unconditionally $p$-compact, approximable  or right $p$-nuclear operators with their respective weighted holomorphic ideals. On the other hand, the procedure is of interest when considering other operators ideals as it provides new weighted holomorphic ideals. This is the case of the ideal of Grothendieck weighted holomorphic mappings or the ideal of Rosenthal weighted holomorphic mappings, where the applicability of this construction is shown.
\end{abstract}

\maketitle

\section{Introduction and preliminaries}

Let $E$ and $F$ be complex Banach spaces, and let $U$ be an open subset of $%
E $. Let $\mathcal{H}(U,F)$ be the space of all holomorphic mappings from $U$
into $F$. A weight $v$ on $U$ is a strictly positive continuous function.
The space of weighted holomorphic mappings, denoted by $\mathcal{H}%
_{v}^{\infty }(U,F)$, is the Banach space consisting of all mappings $f\in 
\mathcal{H}(U,F)$ such that%
\begin{equation*}
\left\Vert f\right\Vert _{v}:=\sup \left\{ v(x)\left\Vert f(x)\right\Vert
\colon x\in U\right\} <\infty ,
\end{equation*}%
with the weighted supremum norm $\Vert \cdot \Vert _{v}$. In particular, $%
\mathcal{H}_{v}^{\infty }(U)$ is defined as$\ \mathcal{H}_{v}^{\infty }(U,%
\mathbb{C})$. Weighted holomorphic mappings on general (infinite dimensional) Banach spaces were introduced in \cite{GaMaRu} and \cite{Ru}, and since then many works have tried to describe their properties, differences and similarities with the finite dimensional case (see for instance \cite{BaDe1,BlaGaMi,  BaDe, BaDe3, JiRaVi,Jo} and the references therein). The lack of local compactness  forces the use of very different techniques for infinite dimensional domains that have been masterfully explored along the latest decades. 

On the other hand, Puhl \cite{Puhl1977} introduced left-hand operator ideals, that have been deeply used in the Theory of operator ideals along the years (e.g. \cite{CaDe,CaNa,JoLiO,Ki}).  Jim\'enez-Vargas and Ruiz-Casternado \cite{JimAysRuiz24} have successfully spread the notion of left-hand quotient of linear operator ideals to bounded holomorphic mappings, using it as a method for generating new bounded holomorphic ideals. 

Our aim is to combine both general settings --weighted holomorphic mappings and left-hand quotients-- to bring both theories into line introducing  a weighted holomorphic version of the concept of the
left-hand quotient of operator ideals. So, our starting point is centered in  weighted holomorphic ideals and their ``left-hand quotients'' with linear operator ideals. Several particular weighted holomorphic ideals have been considered in \cite{CaJiKe} and we will make good use of such examples for our purposes. Note that bounded  holomorphic mappings on $U$ can be seen as a particular case of weighted bounded holomorphic mappings just considering the weight $v(x)=1$ for all $x\in U$. 

Generalizing the case of bounded holomorphic mappings due to Mujica \cite{Mu}, the Banach space $(\mathcal{H}
_{v}^{\infty }(U), \|.\|_v)$ has a predual $\mathcal{G}_{v}^{\infty
}(U)\subset H_v^\infty (U)^*$ that satisfies the following linearization theorem  and properties (see \cite{BaDe3,Ru}): there exists a mapping $\Delta _{v}\in \mathcal{H}_{v}^{\infty
}(U,\mathcal{G}_{v}^{\infty }(U))$ such that for every $f\in \mathcal{H}%
_{v}^{\infty }(U,F)$, there exists a unique bounded linear  operator $T_{f}\in \mathcal{L}(%
\mathcal{G}_{v}^{\infty }(U),F)$ fulfilling $T_{f}\circ \Delta _{v}=f$.
Furthermore, $\left\Vert T_{f}\right\Vert =\left\Vert f\right\Vert _{v}$.
The mapping $\Delta _{v}$ is given by $\Delta _{v}(x)=\delta _{x}$, where  $\delta _{x}\colon \mathcal{H}_{v}^{\infty }(U)\rightarrow F$ is defined by $\delta _{x}(f)=f(x)$, for $f\in \mathcal{H}_{v}^{\infty }(U,F)$.

The linearization technique has been crucial in the last decades, as it has provided a method for using and even transferring known results from linear operators to the context of non linear mappings. Many authors have developed this linearization technique in different contexts, because of its power for handling nonlinear concepts through linear theory (see for instance \cite{BoPeRu,BeBuRu, BuBu, PeRuSa}). Here, we will use linearizations to describe left-hand quotients of weighted holomorphic mappings.

Let us recall
that a normed (Banach) ideal of weighted holomorphic mappings --in short, a normed (Banach) weighted holomorphic ideal-- is an assignment $\left[ \mathcal{I}^{\mathcal{H}_{v}^{\infty }},\Vert \cdot \Vert _{\mathcal{I}^{\mathcal{H}_{v}^{\infty }}}%
\right] $ which associates with every pair $(U,F)$, where $U$ is an open
subset of a complex Banach space $E$ and $F$ is a complex Banach space, a
set $\mathcal{I}^{\mathcal{H}_{v}^{\infty }}(U,F)\subseteq \mathcal{H}%
_{v}^{\infty }(U,F)$ and a function $\Vert \cdot \Vert _{\mathcal{I}^{%
\mathcal{H}_{v}^{\infty }}}\colon \mathcal{I}^{\mathcal{H}_{v}^{\infty
}}(U,F)\rightarrow \mathbb{R}_{0}^{+}$ satisfying the properties:

\begin{itemize}
\item[(P1)] $\left(\mathcal{I}^{\mathcal{H}_v^{\infty}}(U,F),\|\cdot\|_{%
\mathcal{I}^{\mathcal{H}_v^{\infty}}}\right)$ is a normed (Banach) space with $\|f\|_{%
\mathcal{I}^{\mathcal{H}_v^{\infty}}}\geq \|f\|_v$ for all $f\in\mathcal{I}^{%
\mathcal{H}_v^{\infty}}(U,F)$.

\item[(P2)] For any $h\in\mathcal{H}_v^{\infty}(U)$ and $y\in F$, the map $%
h\cdot y\colon x\mapsto h(x)y$ from $U$ to $F$ is in $\mathcal{I}^{\mathcal{H%
}_v^{\infty}}(U,F)$ with $\|h\cdot y\|_{\mathcal{I}^{\mathcal{H}%
_v^{\infty}}}=\|h\|_v\|y\|$.

\item[(P3)] The ideal property: if $V$ is an open subset of $E$ such that $%
V\subseteq U$, $h\in\mathcal{H}(V,U)$ with $c_v(h):=\sup_{x\in
V}(v(x)/v(h(x)))<\infty$ , $f\in \mathcal{I}^{\mathcal{H}_v^{\infty}}(U,F)$
and $T\in\mathcal{L}(F,G)$, where $G$ is a complex Banach space, then $%
T\circ f\circ h\in\mathcal{I}^{\mathcal{H}_v^{\infty}}(V,G)$ with $\|T\circ
f\circ h\|_{\mathcal{I}^{\mathcal{H}_v^{\infty}}}\leq \|T\|\|f\|_{\mathcal{I}%
^{\mathcal{H}_v^{\infty}}}c_v(h)$.
\end{itemize}

A normed (Banach) weighted holomorphic ideal $[\mathcal{I}^{\mathcal{H}%
_v^{\infty}},\|\cdot\|_{\mathcal{I}^{\mathcal{H}_v^{\infty}}}]$ is called:

\begin{itemize}
\item[(S)] Surjective if $f\in \mathcal{I}^{\mathcal{H}_{v}^{\infty }}(U,F)$
with $\left\Vert f\right\Vert _{\mathcal{I}^{\mathcal{H}_{v}^{\infty
}}}=\left\Vert f\circ \pi \right\Vert _{\mathcal{I}^{\mathcal{H}_{v}^{\infty
}}}$, whenever $f\in \mathcal{H}_{v}^{\infty }(U,F),\pi \in \mathcal{H}(V,U)$ is a surjective map, where $V$ is an open subset of a complex Banach space 
$G$ and $f\circ \pi \in \mathcal{I}^{\mathcal{H}_{v}^{\infty }}(V,F).$
\end{itemize}

In Section \ref{s1} left-hand quotients of weighted holomorphic mappings are defined and some basic properties are proved. We characterize when  left-hand quotients coincide with the whole space of weighted holomorphic mappings, and show that the most classical operators ideals as the ideal of all finite rank operators, the ideal of all approximable operators, the ideal of all compact operators, the ideal of all weakly compact operators (and their $p$-compact variants) or the ideal of right $p$-nuclear operators give rise to coincidence results whenever we consider their left-hand quotients with their respective weighted holomorphic ideals. We also show the relation of weighted holomorphic left-hand quotients with composition ideals.

Section \ref{sec} is devoted to give examples of weighted holomorphic left-hand quotients that provide new weighted holomorphic ideals. These examples are based in two well-known operator ideals: the ideal of all Grothendieck operators, denoted by $\mathcal O$, and the ideal of all  Rosenthal operators, denoted by $\mathcal R$. Concretely, we introduce the weighted holomorphic ideals formed by those weighted holomorphic mappings whose weighted holomorphic rank is (1) a Grothendieck set and (2) a Rosenthal set. Let us denote these weighted holomorphic ideals by    $\mathcal H_{v\mathcal O}^\infty $ and $\mathcal H_{v\mathcal R}^\infty $ respectively. With respect to $\mathcal H_{v\mathcal O}^\infty $ the main result shows that this ideal can be described as the left-hand quotient generated by the ideal of all weakly compact weighted holomorphic mappings and the ideal of all separable bounded linear operators, in other words, Grothendieck weighted holomorphic mappings are those whose composition with separable bounded linear operators gives rise to weakly compact weighted holomorphic mappings. Concerning $\mathcal H_{v\mathcal R}^\infty $, it is shown that this ideal can be described as the left-hand quotient generated by the ideal of all compact weighted holomorphic mappings  and the ideal of all completely continuous linear operators, in other words, Rosenthal weighted holomorphic mappings are those whose composition with completely continuous  linear operators gives rise to  compact weighted holomorphic mappings.

\section{Weighted holomorphic left-hand quotient ideals}\label{s1}

Let $\mathcal{I}$ and $\mathcal{J}$ be operator ideals. The left-hand quotient $\mathcal{I}
^{-1}\circ \mathcal{J}$ is  formed by all  bounded linear operators $%
T:E\rightarrow F$, whenever $E,$ $F$ are 
Banach spaces, such that  $A\circ T\in 
\mathcal{J}\left( E,G\right) $ for every Banach space $G$ and every $A\in \mathcal{I}\left( F,G\right)$. In such a case we write $T\in \mathcal{I}^{-1}\circ \mathcal{J}%
\left( E,F\right). $ 

Whenever $\left[ \mathcal{I},\left\Vert .\right\Vert _{\mathcal{I}}%
\right] $ and $\left[ \mathcal{J},\left\Vert .\right\Vert _{\mathcal{J}}%
\right] $ are normed operator ideals, the operator ideal $\mathcal{I}
^{-1}\circ \mathcal{J}$ is endowed with the norm
\begin{equation*}
\left\Vert T\right\Vert _{\mathcal{I}^{-1}\circ \mathcal{J}}=\sup \left\{
\left\Vert A\circ T\right\Vert _{\mathcal{J}}:A\in \mathcal{I}\left(
F,G\right) ,\left\Vert A\right\Vert _{\mathcal{I}}\leq 1\right\} ,
\end{equation*}%
where $G$ ranges over all Banach spaces and $A$ over all continuous linear operators in $ \mathcal{I}\left( F,G\right)$.

For general considerations on left-hand quotients we refer to 
 \cite{Pietsch1980}. 
 
 We first extend this notion to the weighted holomorphic setting.

\begin{definition}\label{Def1}
Let $\left[ \mathcal{A},\left\Vert .\right\Vert _{\mathcal{A}}\right] $ be a
normed operator ideal and let $\left[ \mathcal{I}^{\mathcal{H}_{v}^{\infty
}},\left\Vert .\right\Vert _{\mathcal{I}^{\mathcal{H}_{v}^{\infty }}}\right] 
$ be a normed weighted holomorphic ideal. A mapping $f\in \mathcal{H}%
_{v}^{\infty }(U,F)$ is said to belong to the weighted holomorphic left-hand
quotient $\mathcal{A}^{-1}\mathcal{\circ I}^{\mathcal{H}_{v}^{\infty }}$ and
will be written as $f\in \mathcal{A}^{-1}\mathcal{\circ I}^{\mathcal{H}%
_{v}^{\infty }}\left( U,F\right) $, if $A\circ f\in \mathcal{I}^{\mathcal{H}%
_{v}^{\infty }}\left( U,G\right) $ for all $A\in \mathcal{A}\left(
F,G\right) $ and all complex Banach space $G$. We define the weighted
holomorphic left-hand quotient norm as follows:%
\begin{equation*}
\left\Vert f\right\Vert _{\mathcal{A}^{-1}\mathcal{\circ I}^{\mathcal{H}%
_{v}^{\infty }}}=\sup \left\{ \left\Vert A\circ f\right\Vert _{\mathcal{I}^{%
\mathcal{H}_{v}^{\infty }}}:A\in \mathcal{A}\left( F,G\right) ,\left\Vert
A\right\Vert _{\mathcal{A}}\leq 1\right\} .
\end{equation*}
\end{definition}

Adapting  the proof of \cite[Proposition 2.1]{JimAysRuiz24}, it is not difficult to prove that 
the above supremum  is finite and, using standard arguments, it is not difficult to show that  $\left[ \mathcal{A}%
^{-1}\mathcal{\circ I}^{\mathcal{H}_{v}^{\infty }},\left\Vert .\right\Vert _{%
\mathcal{A}^{-1}\mathcal{\circ I}^{\mathcal{H}_{v}^{\infty }}}\right]$ is a normed weighted holomorphic ideal, which is complete whenever $\left[ \mathcal{I}^{\mathcal{H}_{v}^{\infty
}},\left\Vert .\right\Vert _{\mathcal{I}^{\mathcal{H}_{v}^{\infty }}}\right] 
$ is, and surjective whenever $\left[ \mathcal{I}^{\mathcal{H}_{v}^{\infty
}},\left\Vert .\right\Vert _{\mathcal{I}^{\mathcal{H}_{v}^{\infty }}}\right] 
$ is.

Let us start by showing some basic properties. 
As expected, an inclusion property can be established between weighted
holomorphic left-hand quotients through the inclusion of their corresponding
weighted holomorphic ideals. For two normed weighted holomorphic  ideals $\left[ \mathcal{I}_{1}^{\mathcal{H}_{v}^{\infty }},\left\Vert
.\right\Vert _{\mathcal{I}_{1}^{\mathcal{H}_{v}^{\infty }}}\right] $ and $%
\left[ \mathcal{I}_{2}^{\mathcal{H}_{v}^{\infty }},\left\Vert .\right\Vert _{%
\mathcal{I}_{2}^{\mathcal{H}_{v}^{\infty }}}\right] $, we
write $\left[ \mathcal{I}_{1}^{\mathcal{H}_{v}^{\infty }},\left\Vert .\right\Vert _{%
\mathcal{I}_{1}^{\mathcal{H}_{v}^{\infty }}}\right] \leq \left[ \mathcal{I}%
_{2}^{\mathcal{H}_{v}^{\infty }},\left\Vert .\right\Vert _{\mathcal{I}_{2}^{%
\mathcal{H}_{v}^{\infty }}}\right] $ if $\mathcal{I}_{1}^{\mathcal{H}_{v}^{\infty }}\subseteq \mathcal{I}_{2}^{\mathcal{H}_{v}^{\infty }}$ and $\left\Vert
f\right\Vert _{\mathcal{I}_{2}^{\mathcal{H}_{v}^{\infty }}}\leq \left\Vert f\right\Vert _{\mathcal{I}_{1}^{\mathcal{H}_{v}^{\infty }}}$ for all $f\in \mathcal{I}_{1}^{\mathcal{H}_{v}^{\infty }}.$

\begin{proposition}\label{inclusion}
Let $\left[ \mathcal{I}_{1}^{\mathcal{H}_{v}^{\infty }},\left\Vert
.\right\Vert _{\mathcal{I}_{1}^{\mathcal{H}_{v}^{\infty }}}\right] $ and $%
\left[ \mathcal{I}_{2}^{\mathcal{H}_{v}^{\infty }},\left\Vert .\right\Vert _{%
\mathcal{I}_{2}^{\mathcal{H}_{v}^{\infty }}}\right] $ be normed weighted
holomorphic\ ideals such that%
\begin{equation*}
\left[ \mathcal{I}_{1}^{\mathcal{H}_{v}^{\infty }},\left\Vert .\right\Vert _{%
\mathcal{I}_{1}^{\mathcal{H}_{v}^{\infty }}}\right] \leq \left[ \mathcal{I}%
_{2}^{\mathcal{H}_{v}^{\infty }},\left\Vert .\right\Vert _{\mathcal{I}_{2}^{%
\mathcal{H}_{v}^{\infty }}}\right] ,
\end{equation*}%
then for any normed operator ideal $\left[ \mathcal{A},\left\Vert
.\right\Vert _{\mathcal{A}}\right] $, the following inclusion holds:%
\begin{equation*}
\left[ \mathcal{A}^{-1}\mathcal{\circ I}_{1}^{\mathcal{H}_{v}^{\infty
}},\left\Vert .\right\Vert _{\mathcal{A}^{-1}\mathcal{\circ I}_{1}^{\mathcal{%
H}_{v}^{\infty }}}\right] \leq \left[ \mathcal{A}^{-1}\mathcal{\circ I}_{2}^{%
\mathcal{H}_{v}^{\infty }},\left\Vert .\right\Vert _{\mathcal{A}^{-1}%
\mathcal{\circ I}_{2}^{\mathcal{H}_{v}^{\infty }}}\right] .
\end{equation*}
\end{proposition}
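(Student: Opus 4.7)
The plan is to chase the definitions directly. Fix a pair $(U,F)$ and take $f \in \mathcal{A}^{-1}\circ \mathcal{I}_1^{\mathcal{H}_v^\infty}(U,F)$. First I would show set containment: for any complex Banach space $G$ and any $A \in \mathcal{A}(F,G)$, Definition \ref{Def1} gives $A \circ f \in \mathcal{I}_1^{\mathcal{H}_v^\infty}(U,G)$. The hypothesis $[\mathcal{I}_1^{\mathcal{H}_v^\infty},\|\cdot\|_{\mathcal{I}_1^{\mathcal{H}_v^\infty}}] \leq [\mathcal{I}_2^{\mathcal{H}_v^\infty},\|\cdot\|_{\mathcal{I}_2^{\mathcal{H}_v^\infty}}]$ supplies $\mathcal{I}_1^{\mathcal{H}_v^\infty}(U,G) \subseteq \mathcal{I}_2^{\mathcal{H}_v^\infty}(U,G)$, so $A \circ f \in \mathcal{I}_2^{\mathcal{H}_v^\infty}(U,G)$. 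Since $A$ and $G$ were arbitrary, $f \in \mathcal{A}^{-1}\circ \mathcal{I}_2^{\mathcal{H}_v^\infty}(U,F)$.

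For the norm inequality, I would fix $A \in \mathcal{A}(F,G)$ with $\|A\|_\mathcal{A} \leq 1$ and use the second half of the hypothesis, namely $\|A \circ f\|_{\mathcal{I}_2^{\mathcal{H}_v^\infty}} \leq \|A \circ f\|_{\mathcal{I}_1^{\mathcal{H}_v^\infty}}$. Taking the supremum over all such $A$ and all $G$ yields
\begin{equation*}
\|f\|_{\mathcal{A}^{-1}\circ \mathcal{I}_2^{\mathcal{H}_v^\infty}} \;\leq\; \|f\|_{\mathcal{A}^{-1}\circ \mathcal{I}_1^{\mathcal{H}_v^\infty}},
\end{equation*}
which is precisely the required inequality. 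In particular, finiteness of the right-hand supremum (guaranteed by $f$ lying in the smaller quotient, as noted after Definition \ref{Def1}) transfers automatically.

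There is no real obstacle here; the proof is a straightforward two-step unwinding of Definition \ref{Def1} combined with the pointwise nature of the order relation $\leq$ on weighted holomorphic ideals. The only thing worth emphasizing in the write-up is that the supremum defining $\|f\|_{\mathcal{A}^{-1}\circ \mathcal{I}_2^{\mathcal{H}_v^\infty}}$ and the one defining $\|f\|_{\mathcal{A}^{-1}\circ \mathcal{I}_1^{\mathcal{H}_v^\infty}}$ are taken over the same family of operators $A$, so the inequality between $\mathcal{I}_2$- and $\mathcal{I}_1$-norms passes termwise to the supremum without any further manipulation.
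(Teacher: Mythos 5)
Your argument is correct and is exactly the routine definition-chase the paper evidently has in mind (the paper omits the proof of this proposition entirely, treating it as immediate): the set containment follows from $\mathcal{I}_1^{\mathcal{H}_v^\infty}\subseteq \mathcal{I}_2^{\mathcal{H}_v^\infty}$ applied to each composite $A\circ f$, and the norm inequality follows by passing the termwise bound $\|A\circ f\|_{\mathcal{I}_2^{\mathcal{H}_v^\infty}}\leq\|A\circ f\|_{\mathcal{I}_1^{\mathcal{H}_v^\infty}}$ to the supremum over the same index family. Nothing is missing.
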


Having in mind  that $\left[ \mathcal{H}_{v}^{\infty },\left\Vert \cdot\right\Vert _{v}\right] $ is trivially a Banach weighted holomorphic ideal, by the previous result
\begin{equation*}
\left[ \mathcal{A}^{-1}\mathcal{\circ I}^{\mathcal{H}_{v}^{\infty
}},\left\Vert .\right\Vert _{\mathcal{A}^{-1}\mathcal{\circ I}^{\mathcal{H}%
_{v}^{\infty }}}\right] \leq \left[ \mathcal{A}^{-1}\mathcal{\circ H}%
_{v}^{\infty },\left\Vert .\right\Vert _{\mathcal{A}^{-1}\mathcal{\circ H}%
_{v}^{\infty }}\right]
\end{equation*}%
for any normed operator ideal $\left[ \mathcal{A},\left\Vert .\right\Vert _{%
\mathcal{A}}\right] $ and any normed weighted holomorphic ideal $\left[ \mathcal{I}^{\mathcal{H}_{v}^{\infty }},\left\Vert
.\right\Vert _{\mathcal{I}^{\mathcal{H}_{v}^{\infty }}}\right] $. Hence, $\mathcal{A}^{-1}\mathcal{\circ H}_{v}^{\infty }\ $ can be considered the
largest weighted holomorphic left-hand quotient for any normed operator
ideal $\mathcal{A}$.

Let $\left[ \mathcal{A},\left\Vert .\right\Vert _{\mathcal{A}}\right] $ be a
normed operator ideal and let $\left[ \mathcal{I}^{\mathcal{H}_{v}^{\infty
}},\left\Vert .\right\Vert _{\mathcal{I}^{\mathcal{H}_{v}^{\infty }}}\right] 
$ be a normed weighted holomorphic ideal. We say that $\mathcal{I}^{\mathcal{%
H}_{v}^{\infty }}$ has the linearization property (\textit{LP}, for short)
in $\mathcal{A}$ if given $f\in \mathcal{H}_{v}^{\infty }\left( U,F\right) $%
, we have that $f\in \mathcal{I}^{\mathcal{H}_{v}^{\infty }}\left(
U,F\right) $ if and only if $T_{f}\in \mathcal{A}(\mathcal{G}_{v}^{\infty
}(U),F)$, in whose case $\left\Vert f\right\Vert _{\mathcal{I}^{\mathcal{H}%
_{v}^{\infty }}}=\left\Vert T_f\right\Vert _{\mathcal{A}}.$

The following result shows how strong  the assumption of having the LP is: if
the weighted holomorphic ideal $\mathcal{I}^{\mathcal{H}_{v}^{\infty }}$ has the LP in $\mathcal A$ then, the left-hand quotient $\mathcal A^{-1}\circ \mathcal{I}^{\mathcal{H}_{v}^{\infty }}$ coincides with the whole ideal $\mathcal{H}_{v}^{\infty }$.

\begin{proposition}\label{coincidence}
\label{3}Let $\left[ \mathcal{A},\left\Vert .\right\Vert _{\mathcal{A}}%
\right] $ be a Banach operator ideal and let $\left[ \mathcal{I}^{\mathcal{H}%
_{v}^{\infty }},\left\Vert .\right\Vert _{\mathcal{I}^{\mathcal{H}%
_{v}^{\infty }}}\right] $ be a normed weighted holomorphic ideal. Then,%
\begin{equation*}
\left[ \mathcal{A}^{-1}\mathcal{\circ I}^{\mathcal{H}_{v}^{\infty
}},\left\Vert .\right\Vert _{\mathcal{A}^{-1}\mathcal{\circ I}^{\mathcal{H}%
_{v}^{\infty }}}\right] \leq \left[ \mathcal{H}_{v}^{\infty },\left\Vert
.\right\Vert _{v}\right] .
\end{equation*}

Furthermore%
\begin{equation*}
\left[ \mathcal{A}^{-1}\mathcal{\circ I}^{\mathcal{H}_{v}^{\infty
}},\left\Vert .\right\Vert _{\mathcal{A}^{-1}\mathcal{\circ I}^{\mathcal{H}%
_{v}^{\infty }}}\right] =\left[ \mathcal{H}_{v}^{\infty },\left\Vert
.\right\Vert _{v}\right]
\end{equation*}%
if $\mathcal{I}^{\mathcal{H}_{v}^{\infty }}$ has the LP in $\mathcal{A}$.
\end{proposition}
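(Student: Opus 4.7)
The plan is to split the assertion into the two inequalities encoded in the relation $\leq$ and treat each one separately.

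For the first inclusion $[\mathcal{A}^{-1}\circ \mathcal{I}^{\mathcal{H}_v^\infty}, \|\cdot\|_{\mathcal{A}^{-1}\circ \mathcal{I}^{\mathcal{H}_v^\infty}}] \leq [\mathcal{H}_v^\infty, \|\cdot\|_v]$, the set-theoretic containment is immediate from Definition \ref{Def1}, so only the norm inequality $\|f\|_v \leq \|f\|_{\mathcal{A}^{-1}\circ \mathcal{I}^{\mathcal{H}_v^\infty}}$ needs to be proved. The natural probe is to test the quotient against scalar functionals. Since $[\mathcal{A},\|\cdot\|_\mathcal{A}]$ is a Banach operator ideal, every $y^{\ast}\in F^{\ast}$ is a rank-one operator into $\mathbb{C}$ and therefore belongs to $\mathcal{A}(F,\mathbb{C})$ with $\|y^{\ast}\|_{\mathcal{A}}=\|y^{\ast}\|$. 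For $y^{\ast}$ in the unit ball of $F^{\ast}$ one then has $y^{\ast}\circ f\in \mathcal{I}^{\mathcal{H}_v^\infty}(U,\mathbb{C})$ and, by property (P1),
$$\|y^{\ast}\circ f\|_v \leq \|y^{\ast}\circ f\|_{\mathcal{I}^{\mathcal{H}_v^\infty}}\leq \|f\|_{\mathcal{A}^{-1}\circ \mathcal{I}^{\mathcal{H}_v^\infty}}.$$
Using the Hahn--Banach identity $\|f(x)\| = \sup_{\|y^\ast\|\leq 1}|y^{\ast}(f(x))|$ and interchanging the two suprema, the left-hand side above reaches $\|f\|_v$, which yields the claim.

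For the equality under the LP hypothesis, the work is to establish the reverse inequality $\|f\|_{\mathcal{A}^{-1}\circ \mathcal{I}^{\mathcal{H}_v^\infty}}\leq \|f\|_v$ for every $f\in \mathcal{H}_v^\infty(U,F)$. Fix such an $f$ and an arbitrary $A\in \mathcal{A}(F,G)$ with $\|A\|_{\mathcal{A}}\leq 1$. The pivotal observation is the linearization identity $T_{A\circ f}=A\circ T_f$, which follows at once from the uniqueness part of the linearization theorem recalled in the introduction, since both $A\circ T_f$ and $T_{A\circ f}$ factor $A\circ f$ through $\Delta_v$. The ideal property of $\mathcal{A}$ then gives $A\circ T_f\in \mathcal{A}(\mathcal{G}_v^\infty(U),G)$ with $\|A\circ T_f\|_{\mathcal{A}}\leq \|A\|_{\mathcal{A}}\|T_f\|\leq \|T_f\|=\|f\|_v$. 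Invoking the LP of $\mathcal{I}^{\mathcal{H}_v^\infty}$ in $\mathcal{A}$, one concludes $A\circ f\in \mathcal{I}^{\mathcal{H}_v^\infty}(U,G)$ with
$$\|A\circ f\|_{\mathcal{I}^{\mathcal{H}_v^\infty}}=\|T_{A\circ f}\|_{\mathcal{A}}=\|A\circ T_f\|_{\mathcal{A}}\leq \|f\|_v.$$
Taking the supremum over all admissible $A$ delivers the inequality, and combined with the first part it yields equality of norms.

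I do not anticipate a genuine obstacle. The only points that might need explicit mention are (a) the standard fact that a Banach operator ideal contains $F^\ast$ isometrically via rank-one factorizations, which is tacit in the excerpt's definitions, and (b) the linearization identity $T_{A\circ f}=A\circ T_f$, which is the single structural ingredient that transports the LP assumption from the linearized side back to $\mathcal{I}^{\mathcal{H}_v^\infty}$. Both are routine, so the argument is essentially a bookkeeping exercise blending Definition \ref{Def1}, property (P1), and the factorization through $\mathcal{G}_v^\infty(U)$.
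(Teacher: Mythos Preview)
Your proof is correct and follows essentially the same approach as the paper's own argument: both parts use scalar functionals $y^{\ast}\in B_{F^{\ast}}\subset \mathcal{A}(F,\mathbb{C})$ together with (P1) for the inequality $\|f\|_v\leq \|f\|_{\mathcal{A}^{-1}\circ \mathcal{I}^{\mathcal{H}_v^\infty}}$, and the linearization identity $T_{A\circ f}=A\circ T_f$ combined with the ideal property of $\mathcal{A}$ and the LP hypothesis for the reverse inequality. The only cosmetic difference is that the paper fixes a norming functional $y_x^{\ast}$ for each $x\in U$ rather than interchanging suprema, which amounts to the same computation.
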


\begin{proof}
Let $f\in \mathcal{A}^{-1}\mathcal{\circ I}^{\mathcal{H}_{v}^{\infty
}}\left( U,F\right) $, we have that $\left\Vert f\right\Vert _{v}\leq
\left\Vert f\right\Vert _{\mathcal{A}^{-1}\mathcal{\circ I}^{\mathcal{H}%
_{v}^{\infty }}}$. Indeed, notice that $f\in \mathcal{H}_{v}^{\infty }\left(
U,F\right) $ and $A\circ f\in \mathcal{I}^{\mathcal{H}_{v}^{\infty }}\left(
U,G\right) $ for all $A\in \mathcal{A}\left( F,G\right) $ and all 
complex Banach space $G$. For any $x\in U$ consider a functional $y^{\ast }_x$ in the closed unit ball of the dual space $F^{\ast }$ of $F$ so that $\left\Vert
f\left( x\right) \right\Vert =\left\vert y^{\ast }_x\left( f\left( x\right)
\right) \right\vert $. Taking into account that $\left[ \mathcal{A}%
,\left\Vert .\right\Vert _{\mathcal{A}}\right] $ is an operator ideal,
we have that  $y^{\ast }_x$ belongs to $\mathcal{A}\left( F,\mathbb{C}\right) $
with $\left\Vert y^{\ast }_x\right\Vert _{\mathcal{A}}=\left\Vert
y^{\ast }_x\right\Vert \leq 1$. Then, $y^*_x\circ f\in \mathcal{ I}^{\mathcal{H}_{v}^{\infty
}}$ and $\|y_x^{\ast}\circ f\|_{\mathcal I^{\mathcal H_v^\infty}}\leq \|f\|_{\mathcal A^{-1}\circ \mathcal I^{\mathcal H_v^\infty}}$. Hence, 

$$
v(x)\|f(x)\|=v(x)|y_x^*(f(x))|\leq \|y_x^*\circ f\|_v\leq \|y_x^*\circ f\|_{\mathcal I^{\mathcal H_v^\infty}}\leq \|f\|_{\mathcal A^{-1}\circ \mathcal I^{\mathcal H_v^\infty}}
$$
and taking the supremum over all $x\in U$, we conclude that $\left\Vert
f\right\Vert _{v}\leq \left\Vert f\right\Vert _{\mathcal{A}^{-1}\mathcal{%
\circ I}^{\mathcal{H}_{v}^{\infty }}}.$

Let us now assume that $\mathcal{I}^{\mathcal{H}_{v}^{\infty }}$ has the 
\textit{LP} in the operator ideal $\mathcal{A}$. Let $f\in \mathcal{H}%
_{v}^{\infty }\left( U,F\right) $ and let $A\in \mathcal{A}\left( F,G\right) 
$, for a complex Banach space $G$. It is not difficult to see that $A\circ
f\in \mathcal{H}_{v}^{\infty }\left( U,G\right) $. Consider the linearizations $T_f\in \mathcal{L}(\mathcal{G}_{v}^{\infty }(U),F)$ and $T_{A\circ
f}\in \mathcal{L}(\mathcal{G}_{v}^{\infty }(U),G)$. We know that  $\left\Vert
T_{A\circ f}\right\Vert =\left\Vert A\circ f\right\Vert _{v}$ and $\left\Vert T_{f}\right\Vert =\left\Vert f\right\Vert _{v}$. Since
\begin{equation*}
T_{A\circ f}\circ \Delta _{v}=A\circ f=A\circ T_{f}\circ \Delta _{v},
\end{equation*}
by the uniqueness of the linearization, $T_{A\circ f}=A\circ T_{f}$. Thus, by the ideal property of $\mathcal{A}$,
we have $T_{A\circ f}\in \mathcal{A}(\mathcal{G}_{v}^{\infty }(U),G)$ with%
\begin{equation*}
\left\Vert T_{A\circ f}\right\Vert _{\mathcal{A}}=\left\Vert A\circ
T_{f}\right\Vert _{\mathcal{A}}\leq \left\Vert A\right\Vert _{\mathcal{A}%
}\left\Vert T_{f}\right\Vert \leq \left\Vert A\right\Vert _{\mathcal{A}%
}\left\Vert f\right\Vert _{v}.
\end{equation*}%
Since $\mathcal{I}^{\mathcal{H}_{v}^{\infty }}$ has the \textit{LP} in $%
\mathcal{A}$  we have $A\circ f\in \mathcal{I}^{\mathcal{H}_{v}^{\infty }}\left(
U,G\right) $ with $\left\Vert A\circ f\right\Vert _{\mathcal{I}^{\mathcal{H}%
_{v}^{\infty }}}=\left\Vert T_{A\circ f}\right\Vert _{\mathcal{A}}$. Therefore,  $f\in 
\mathcal{A}^{-1}\mathcal{\circ I}^{\mathcal{H}_{v}^{\infty }}\left(
U,F\right) $ and 
\begin{eqnarray*}
\left\Vert f\right\Vert _{\mathcal{A}^{-1}\mathcal{\circ I%
}^{\mathcal{H}_{v}^{\infty }}}&=&\sup\{ \|A\circ f\|_{\mathcal I^{\mathcal H_v^\infty}} : A\in \mathcal A, \|A\|_{\mathcal A}\leq 1\}\\
&=&  \sup\{ \|T_{A\circ f}\|_{\mathcal A} : A\in \mathcal A, \|A\|_{\mathcal A}\leq 1\}\\
&=&\sup\{ \|A\circ T_f\|_{\mathcal A} : A\in \mathcal A, \|A\|_{\mathcal A}\leq 1\}\leq \|T_f\|= \left\Vert f\right\Vert _{v}
 \end{eqnarray*}
  as desired.
\end{proof}

Let us first see several cases where the above result applies and gives coincidence results for weighted holomorphic ideals.   A set $K\subset F$ is:

\medskip
$\bullet$  relatively $p$-compact, $p\in (1,\infty)$,  if there is a $p$-summable sequence $(x_n)_n$ in $F$   such that $K\subset p-conv(x_n):=\{ \sum_{n=1}^\infty a_nx_n: \|(a_n)_n\|_{p^*}=(\sum_{n=1}^\infty |a_n|^{p^*})^{1/p^*}\leq 1\}$.

\medskip
$\bullet$  relatively $1$-compact if there is a $1$-summable sequence $(x_n)_n$ in $F$   such that $K\subset 1-conv(x_n):=\{ \sum_{n=1}^\infty a_nx_n: \|(a_n)_n\|_{\infty}=\sup_n |a_n|\leq 1\}$.

\medskip
$\bullet$  relatively $\infty$-compact if there is a  a null sequence $(x_n)_n$ in $F$  such that $K\subset \infty-conv(x_n):=\{ \sum_{n=1}^\infty a_nx_n: \|(a_n)_n\|_{1}=\sum_{n=1}^\infty |a_n|\leq 1\}$.

\medskip
By a well known theorem due to Grothendieck, relatively $\infty$-compact sets reduce to all compact sets.  Besides,  if $1\leq p\leq q\leq \infty$ then, any $p$-compact set is $q$-compact.

Similarly one can define relatively unconditionally (weakly) $p$-compact sets   just considering unconditionally (resp. weakly) $p$-summable sequences $(x_n)_n$ for $p\in [1,\infty)$ or   null (resp. weakly null) sequences $(x_n)_n$ for $p=\infty$.

Let $\mathcal K_p, \mathcal K_{wp}$ and $\mathcal K_{up}$ be the ideals of all $p$-compact (maps the unit ball into a relatively $p$-compact set), weakly $p$-compact (maps the unit ball into a relatively weakly $p$-compact set) and unconditionally $p$-compact (maps the unit ball into a relatively unconditionally $p$-compact set) linear operators respectively. For $i=p,wp,up$, and $p\in [1,\infty]$, $\mathcal K_i$ is a Banach ideal when endowed with the norm
$$
\|T\|_{\mathcal K_i}:=\inf \| (x_n)_n\|_{i}, \quad T\in \mathcal K_i(E;F)
$$
where the infimum is taken over all sequences $(x_n)_n$ as above such that $T(B_E)\subset p-conv(x_n)$.

It is clear that $\mathcal K_\infty$ coincides with the ideal $\mathcal K$ of all compact operators   and, similarly, $\mathcal K_{w\infty}=\mathcal K_w$ is the ideal of all wealy compact operators, and $\mathcal K_{u\infty}=\mathcal K_u$ is the ideal of all unconditional compact operators.

\begin{example}   In \cite{CaJiKe} the spaces $\mathcal H_{v \mathcal K_p}^\infty(U,F),\mathcal H_{v \mathcal K_{wp}}^\infty(U,F)$ and $\mathcal H_{v \mathcal K_{up}}^\infty(U,F)$ of all weighted holomorphic mappings $f:U\to F$ for which $(vf)(U)$ is relatively $p$-compact, relatively weakly $p$-compact and relatively unconditionally $p$-compact respectively form Banach ideals of weighted holomorphic mappings with the respective norms 
$$
\|f\|_{\mathcal H_{v\mathcal K_i}^\infty}=\inf\{ \|(x_n)_n\|_i\} \quad i=p,wp,up
$$
where the infimum is taken over all $(x_n)_n$ such that $(vf)(U)\subset p-conv(x_n)$ and being $(x_n)_n$ $p$-summable if $i=p$, unconditionally $p$-summable if $i=up$ and weakly $p$-summable if $i=wp$.

 It is easy to realise that $[\mathcal I^{-1}\circ \mathcal H_{v\mathcal I}^\infty, \|\cdot\|_{\mathcal I^{-1}\circ \mathcal H_{v\mathcal I}^\infty}]=[\mathcal H_v^\infty,\|\cdot\|_\infty]$ for $\mathcal I=\mathcal K_p, \mathcal K_{wp}, \mathcal K_{up}$.

Proposition \ref{coincidence} fits with  the above coincidences as  all these ideals have the LP in the operator ideal of $p$-compact, weakly $p$-compact and unconditionally $p$-compact operators respectively. That is, if $f\in \mathcal H_v^\infty(U,F)$ and we write $\mathcal I=\mathcal K_p, \mathcal K_{wp}, \mathcal K_{up}$, $p\in [1,\infty]$, then $f\in \mathcal H_{v\mathcal I}^\infty (u,F)$ if, and only if, its linearization $T_f\in \mathcal I(\mathcal G_v^\infty(u),F)$. Besides, $\| f\|_{\mathcal H_{v\mathcal I}^\infty}=\|T_f\|_{\mathcal I}$ (see \cite[Theorem 2.9]{CaJiKe}).

 \end{example}

\begin{example}

Consider the  operator ideal $\mathcal F$ of all finite rank operators. Even not being a closed ideal,  it is clear that the linear hull of $v T\circ f (U)$ is a finite dimensional subspace of $G$ whenever $f\in \mathcal H_v^\infty (U,F)$ and $T\in \mathcal F(F,G)$. If we set $\mathcal H_{v\mathcal F}^\infty(U,F)$ formed by all $f\in \mathcal H_{v}^\infty(U,F)$ such that the linear hull of $(v f) (U)$ is a finite dimensional subspace of $F$, then we can write $[\mathcal F^{-1}\circ \mathcal H_{v\mathcal F}^\infty, \|\cdot\|_{\mathcal F^{-1}\circ \mathcal H_{v\mathcal F}^\infty}]=[\mathcal H_v^\infty,\|\cdot\|_\infty]$.

Consider now the closed operator ideal $\overline{\mathcal F}$  formed by all approximable linear operators. A mapping $f\in \mathcal H_v^\infty (U,F)$ is called approximable if there exists a sequence $(f_n)_n\in \mathcal H_{v\mathcal F}^\infty (U,F)$ such that $\|f_n-f\|_v\to 0$ as $n\to\infty$. Approximable weighted holomorphic mappings were introduced in \cite{CaJiKe}. In \cite[Proposition 2.24]{CaJiKe} it is proved that the Banach weighted holomorphic ideal $\mathcal H_{v\overline{\mathcal F}}^\infty$ formed by all approximable bounded weighted holomorphic mappings with the weighted norm, has the LP in $\overline{\mathcal F}$. So, an application of Proposition \ref{coincidence} (or Proposition \ref{inclusion} and the above) concludes that $[\overline{\mathcal F}^{-1}\circ \mathcal H_{v\overline{\mathcal F}}^\infty, \|\cdot\|_{\mathcal F^{-1}\circ \mathcal H_{v\mathcal F}^\infty}]=[\mathcal H_v^\infty,\|\cdot\|_\infty]$.

\end{example}

\begin{example}
Other examples where Proposition \ref{coincidence} applies to reinforce coincidence results are given when considering the ideal of all right $p$-nuclear  weighted holomorphic mappings as introduced in \cite{CaJiKe}. Let $1<p,p^*<\infty $ with $\frac 1p+\frac 1{p^*}=1$. A mapping $f\in \mathcal H_{v}^\infty(U,F)$ is right $p$-nuclear if there exists a weakly $p^*$-summable  sequence $(g_n)_n$ in $\mathcal H_v^\infty(U)$ and a $p$-summable sequence $(y_n)_n$ in $F$  such that $f=\sum_{n=1}^\infty g_n\cdot y_n$ in $(\mathcal H_v^\infty (U,F),\|\cdot\|_v)$. We denote $\mathcal H_{v\mathcal N_p}^\infty(U,F)$ the space of all right $p$-nuclear  weighted holomorphic mappings $f:U\to F$ endowed with the norm
$$
\|f\|_{\mathcal H_{v\mathcal N_p}^\infty}:=\inf\{\|(g_n)_n\|_{p^*}^w\|(y_n)_n\|_p\},
$$
where the infimum is taken over all right $p$-nuclear  weighted holomorphic representations of $f$. This definition derives from the operator ideal $\mathcal N^p$ of all right $p$-nuclear linear operators, just considering sequences  $(g_n)_n$ of holomorphic functions instead of continuous linear functionals in $E^*$.

By \cite[Proposition 2.28]{CaJiKe} the ideal $\mathcal H_{v\mathcal N_p}^\infty$ has the LP in $\mathcal N^p$ . Hence, by Proposition \ref{coincidence} 
$$
[(\mathcal N^p)^{-1}\circ \mathcal H_{v\mathcal N_p}^\infty, \|\cdot \|_{(\mathcal N^p))^{-1}\circ \mathcal H_{v\mathcal N_p}^\infty}]=[\mathcal H_v^\infty,\|\cdot\|_v].
$$
\end{example}

Let us summarize the above situations by proving what is happening behind all these examples, as show the next two resuls. We first recall the composition method for generating weighted holomorphic
ideals. Given an operator ideal $\mathcal{A}$ and a  weighted holomorphic ideal $\mathcal I^{\mathcal H_v^\infty}$, a map $f\in \mathcal{H}%
_{v}^{\infty }(U,F)$ belongs to $ \mathcal{A}\circ \mathcal I^{\mathcal H_v^\infty}(U,F)$ if there exist a complex Banach space $G$, an operator 
$A\in \mathcal{A}\left( G,F\right) $, and a map $g\in \mathcal I^{\mathcal H_v^\infty}(U,G)$, such that $f=A\circ g$. Let   $\mathcal{A}\circ 
\mathcal I^{\mathcal H_v^\infty}$ denote the composition ideal formed by all such $f$.

If $\left[ \mathcal{A},\left\Vert .\right\Vert _{\mathcal{A}}\right] $ is a
normed operator ideal, $[\mathcal I^{\mathcal H_v^\infty}, \|\cdot\|_{\mathcal I^{\mathcal H_v^\infty}}]$ is a normed weighted holomorphic ideal and $f\in \mathcal{A}\circ \mathcal I^{\mathcal H_v^\infty}$,
we define its norm as%
\begin{equation*}
\left\Vert f\right\Vert _{\mathcal{A}\circ \mathcal I^{\mathcal H_v^\infty}}=\inf
\left\{ \left\Vert A\right\Vert _{\mathcal{A}}\left\Vert g\right\Vert
_{\mathcal I^{\mathcal H_v^\infty}}\right\} \text{,}
\end{equation*}%
where the infimum is taken over all factorizations of $f$ as described above.

\begin{theorem}\label{caract}
Let $\left[ \mathcal{A},\left\Vert .\right\Vert _{\mathcal{A}}%
\right] $ be a normed operator ideal and let $\left[ \mathcal{I}^{\mathcal{H}%
_{v}^{\infty }},\left\Vert .\right\Vert _{\mathcal{I}^{\mathcal{H}%
_{v}^{\infty }}}\right] $ be a normed weighted holomorphic ideal. Then, 
$$
\left[ \mathcal{A}\circ\mathcal{ I}^{\mathcal{H}_{v}^{\infty
}},\left\Vert .\right\Vert _{\mathcal{A}\mathcal{\circ I}^{\mathcal{H}%
_{v}^{\infty }}}\right] \leq \left[\mathcal I^{ \mathcal{H}_{v}^{\infty }},\left\Vert
.\right\Vert _{\mathcal I^{\mathcal{H}_{v}^{\infty }}}\right] 
\mbox{ if, and only if, }\left[\mathcal A^{-1}\circ \mathcal I^{\mathcal H_v^\infty}, \|\cdot \|_{\mathcal A^{-1}\circ \mathcal I^{\mathcal H_v^\infty}}\right]=\left[\mathcal H_v^\infty,\|\cdot\|_v\right].$$
\end{theorem}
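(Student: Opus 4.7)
The plan is to prove the two implications separately. Since Proposition \ref{coincidence} already yields the unconditional inequality $[\mathcal{A}^{-1}\circ\mathcal{I}^{\mathcal{H}_v^\infty}]\leq[\mathcal{H}_v^\infty]$, the task in the $(\Rightarrow)$ direction boils down to the reverse estimate $\|f\|_{\mathcal{A}^{-1}\circ\mathcal{I}^{\mathcal{H}_v^\infty}}\leq\|f\|_v$ for every $f\in\mathcal{H}_v^\infty$; equivalently, $\|A\circ f\|_{\mathcal{I}^{\mathcal{H}_v^\infty}}\leq\|A\|_{\mathcal{A}}\|f\|_v$ for all $A\in\mathcal{A}(F,G)$.

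For the $(\Leftarrow)$ direction, assume $[\mathcal{A}^{-1}\circ\mathcal{I}^{\mathcal{H}_v^\infty}]=[\mathcal{H}_v^\infty]$. Given $f\in\mathcal{A}\circ\mathcal{I}^{\mathcal{H}_v^\infty}(U,F)$ with a factorization $f=A\circ g$ where $A\in\mathcal{A}(G,F)$ and $g\in\mathcal{I}^{\mathcal{H}_v^\infty}(U,G)$, I note that $g\in\mathcal{H}_v^\infty(U,G)$, so by hypothesis $g\in\mathcal{A}^{-1}\circ\mathcal{I}^{\mathcal{H}_v^\infty}(U,G)$ with $\|g\|_{\mathcal{A}^{-1}\circ\mathcal{I}^{\mathcal{H}_v^\infty}}=\|g\|_v$. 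The very definition of the quotient norm then yields $\|A\circ g\|_{\mathcal{I}^{\mathcal{H}_v^\infty}}\leq\|A\|_{\mathcal{A}}\|g\|_{\mathcal{A}^{-1}\circ\mathcal{I}^{\mathcal{H}_v^\infty}}=\|A\|_{\mathcal{A}}\|g\|_v\leq\|A\|_{\mathcal{A}}\|g\|_{\mathcal{I}^{\mathcal{H}_v^\infty}}$, and passing to the infimum over all admissible factorizations of $f$ produces $\|f\|_{\mathcal{I}^{\mathcal{H}_v^\infty}}\leq\|f\|_{\mathcal{A}\circ\mathcal{I}^{\mathcal{H}_v^\infty}}$. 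The set inclusion $\mathcal{A}\circ\mathcal{I}^{\mathcal{H}_v^\infty}\subseteq\mathcal{I}^{\mathcal{H}_v^\infty}$ comes for free from (P3).

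For the $(\Rightarrow)$ direction, assume $[\mathcal{A}\circ\mathcal{I}^{\mathcal{H}_v^\infty}]\leq[\mathcal{I}^{\mathcal{H}_v^\infty}]$, fix $f\in\mathcal{H}_v^\infty(U,F)$ and $A\in\mathcal{A}(F,G)$, and use the linearization to write $A\circ f=(A\circ T_f)\circ\Delta_v$. The ideal property of $\mathcal{A}$ gives $A\circ T_f\in\mathcal{A}(\mathcal{G}_v^\infty(U),G)$ with $\|A\circ T_f\|_{\mathcal{A}}\leq\|A\|_{\mathcal{A}}\|T_f\|=\|A\|_{\mathcal{A}}\|f\|_v$. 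The plan is to read this as a factorization realizing $A\circ f$ inside $\mathcal{A}\circ\mathcal{I}^{\mathcal{H}_v^\infty}(U,G)$, so that the hypothesis delivers $A\circ f\in\mathcal{I}^{\mathcal{H}_v^\infty}(U,G)$ with $\|A\circ f\|_{\mathcal{I}^{\mathcal{H}_v^\infty}}\leq\|A\circ T_f\|_{\mathcal{A}}\leq\|A\|_{\mathcal{A}}\|f\|_v$; taking the supremum over $\|A\|_{\mathcal{A}}\leq 1$ then closes the argument.

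The main obstacle lies in precisely this last step: making the linearized factorization $(A\circ T_f)\circ\Delta_v$ actually land in $\mathcal{A}\circ\mathcal{I}^{\mathcal{H}_v^\infty}$ requires $\Delta_v$ (or a suitable surrogate) to belong to $\mathcal{I}^{\mathcal{H}_v^\infty}(U,\mathcal{G}_v^\infty(U))$ with controlled norm, and this is not automatic from the axioms (P1)--(P3) alone. To push the argument through, I would combine (P2) with the composition-ideal hypothesis, possibly via an intermediate approximation in $\mathcal{G}_v^\infty(U)$ or through the surjective hull of $\mathcal{A}^{-1}\circ\mathcal{I}^{\mathcal{H}_v^\infty}$, forcing enough rank-one structure in $\mathcal{I}^{\mathcal{H}_v^\infty}$ to interpret the linearized factorization as admissible.
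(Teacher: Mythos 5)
Your $(\Leftarrow)$ direction is essentially sound, but your own remark there should have set off an alarm: as you observe, the inclusion $\mathcal{A}\circ\mathcal{I}^{\mathcal{H}_v^\infty}\subseteq\mathcal{I}^{\mathcal{H}_v^\infty}$ follows from (P3) alone, and the same computation ($\|A\circ g\|_{\mathcal{I}^{\mathcal{H}_v^\infty}}\leq\|A\|\,\|g\|_{\mathcal{I}^{\mathcal{H}_v^\infty}}\leq\|A\|_{\mathcal{A}}\|g\|_{\mathcal{I}^{\mathcal{H}_v^\infty}}$, then the infimum over factorizations) gives the norm inequality unconditionally. So the left-hand condition of the equivalence, read literally, holds for \emph{every} pair $(\mathcal{A},\mathcal{I}^{\mathcal{H}_v^\infty})$, while the right-hand condition fails for the examples of Section \ref{sec} (e.g. $\mathcal{S}^{-1}\circ\mathcal{H}_{v\mathcal{K}_w}^{\infty}=\mathcal{H}_{v\mathcal{O}}^{\infty}\neq\mathcal{H}_v^{\infty}$ in general). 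The statement as printed must therefore carry a typo: the composition ideal on the left is meant to be $\mathcal{A}\circ\mathcal{H}_v^{\infty}$, not $\mathcal{A}\circ\mathcal{I}^{\mathcal{H}_v^{\infty}}$, and that is exactly what the paper's proof works with in both directions (it deduces $A\circ f\in\mathcal{A}\circ\mathcal{H}_v^{\infty}(U,G)\subset\mathcal{I}^{\mathcal{H}_v^{\infty}}(U,G)$ in the forward direction, and in the converse it takes $f=A\circ g$ with $g$ merely in $\mathcal{H}_v^{\infty}$).

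This diagnosis localizes your genuine gap. In the $(\Rightarrow)$ direction you correctly identify that feeding $A\circ f=(A\circ T_f)\circ\Delta_v$ into the hypothesis would require $\Delta_v\in\mathcal{I}^{\mathcal{H}_v^{\infty}}(U,\mathcal{G}_v^{\infty}(U))$, and your closing paragraph about "an intermediate approximation or the surjective hull" is not an argument; no repair exists, because the implication you are attempting is false for the literal statement. With the corrected hypothesis $[\mathcal{A}\circ\mathcal{H}_v^{\infty}]\leq[\mathcal{I}^{\mathcal{H}_v^{\infty}}]$ the difficulty evaporates and no linearization is needed: for $f\in\mathcal{H}_v^{\infty}(U,F)$ and $A\in\mathcal{A}(F,G)$ with $\|A\|_{\mathcal{A}}\leq1$, the trivial factorization with inner factor $f$ itself shows $A\circ f\in\mathcal{A}\circ\mathcal{H}_v^{\infty}(U,G)$ with $\|A\circ f\|_{\mathcal{A}\circ\mathcal{H}_v^{\infty}}\leq\|A\|_{\mathcal{A}}\|f\|_v\leq\|f\|_v$, so the hypothesis yields $A\circ f\in\mathcal{I}^{\mathcal{H}_v^{\infty}}(U,G)$ with $\|A\circ f\|_{\mathcal{I}^{\mathcal{H}_v^{\infty}}}\leq\|f\|_v$; taking the supremum over such $A$ gives $\|f\|_{\mathcal{A}^{-1}\circ\mathcal{I}^{\mathcal{H}_v^{\infty}}}\leq\|f\|_v$, and the reverse inequality is Proposition \ref{coincidence}. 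Correspondingly, your $(\Leftarrow)$ direction must be upgraded to conclude $[\mathcal{A}\circ\mathcal{H}_v^{\infty}]\leq[\mathcal{I}^{\mathcal{H}_v^{\infty}}]$: take $f=A\circ g$ with $g\in\mathcal{H}_v^{\infty}(U,G)$ only, use $g\in\mathcal{A}^{-1}\circ\mathcal{I}^{\mathcal{H}_v^{\infty}}(U,G)$ with $\|g\|_{\mathcal{A}^{-1}\circ\mathcal{I}^{\mathcal{H}_v^{\infty}}}=\|g\|_v$, and stop your chain of inequalities at $\|A\|_{\mathcal{A}}\|g\|_v$ before taking the infimum; the final step $\|g\|_v\leq\|g\|_{\mathcal{I}^{\mathcal{H}_v^{\infty}}}$ in your chain is exactly what you must \emph{not} use, since $g$ need not belong to $\mathcal{I}^{\mathcal{H}_v^{\infty}}(U,G)$.
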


\begin{proof}
First assume that $ \left[ \mathcal{A}\circ\mathcal{ I}^{\mathcal{H}_{v}^{\infty
}},\left\Vert .\right\Vert _{\mathcal{A}\mathcal{\circ I}^{\mathcal{H}%
_{v}^{\infty }}}\right] \leq \left[\mathcal I^{ \mathcal{H}_{v}^{\infty }},\left\Vert
.\right\Vert _{\mathcal I^{\mathcal{H}_{v}^{\infty }}}\right]$. Let $f\in H_v^\infty (U,F)$ and take $A\in \mathcal A(F,G)$. Then, 
$$
A\circ f\in \mathcal A\circ \mathcal H_v^\infty(U,G)\subset \mathcal I^{ \mathcal{H}_{v}^{\infty }}(U,F).
$$
This means that $f\in \mathcal A^{-1}\circ \mathcal I^{\mathcal H_v^\infty}(U,F)$. Moreover, if $\|A\|_{\mathcal A}\leq 1$ then from the hypothesis of being $\left\Vert
.\right\Vert _{\mathcal I^{\mathcal{H}_{v}^{\infty }}}\leq \left\Vert .\right\Vert _{\mathcal{A}\mathcal{\circ I}^{\mathcal{H}%
_{v}^{\infty }}}$ we get
$$
\|A\circ f\|_{\mathcal I^{\mathcal H_v^\infty}}\leq \| A\circ f\|_{\mathcal A\circ \mathcal H_v^\infty}\leq \| A\|_{\mathcal A}\| f\|_v\leq \| f\|_v.
$$
Then, taking supremum on $A$ we obtain
$$
\|f\|_{\mathcal A^{-1}\circ \mathcal I^{\mathcal H_v^\infty}}\leq \|f\|_v.
$$
Hence, the equality of the norms $\|f\|_{\mathcal A^{-1}\circ \mathcal I^{\mathcal H_v^\infty}}= \|f\|_v$ follows.

Reciprocally, any $f\in \mathcal A\circ \mathcal H_v^\infty(U,G)$ can be written as $f=A\circ g$, with $A\in \mathcal A(F,G)$ and $g\in \mathcal H_v^\infty (U,F)$ for some suitable Banach space $F$ that depends on $f$. Then, by hypothesis $g\in  \mathcal H_v^\infty (U,F)=\mathcal A^{-1}\circ \mathcal I^{\mathcal H_v^\infty}(U,F)$ and $\|g\|_v=\|g\|_{\mathcal A^{-1}\circ \mathcal I^{\mathcal H_v^\infty}}$. Hence, $f=A\circ g\in \mathcal{ I}^{\mathcal{H}_{v}^{\infty
}}(U,G)$ and 
$$
\|f\|_{\mathcal I^{\mathcal H_v^\infty}}=\|A\circ g\|_{\mathcal I^{\mathcal H_v^\infty}}\leq \|A\|_{\mathcal A}\|g\|_{\mathcal A^{-1}\circ \mathcal I^{\mathcal H_v^\infty}}=\|A\|_{\mathcal A}\|g\|_v.
$$
Taking the infimum on $A$ we get $\|f\|_{\mathcal I^{\mathcal H_v^\infty}}\leq \| f\|_{\mathcal A\circ \mathcal I^{\mathcal H_v^\infty}}$ as desired.
\end{proof}

\begin{proposition}\label{LinPro}
Let $\left[ \mathcal{A},\left\Vert .\right\Vert _{\mathcal{A}}%
\right] $ be a normed operator ideal and let $\left[ \mathcal{I}^{\mathcal{H}%
_{v}^{\infty }},\left\Vert .\right\Vert _{\mathcal{I}^{\mathcal{H}%
_{v}^{\infty }}}\right] $ be a normed weighted holomorphic ideal. Then, $ \mathcal{I}^{\mathcal{H}%
_{v}^{\infty }}$ has the LP in $\mathcal{A}$ if, and only if, the equality  $\left[ \mathcal{I}^{\mathcal{H}%
_{v}^{\infty }},\left\Vert .\right\Vert _{\mathcal{I}^{\mathcal{H}%
_{v}^{\infty }}}\right] =\left[\mathcal{A}\circ \mathcal H_v^\infty, \|\cdot\|_{\mathcal{A}\circ \mathcal H_v^\infty}\right]$ holds.
\end{proposition}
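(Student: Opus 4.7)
The plan is to prove both directions by exploiting the canonical factorization $f = T_f \circ \Delta_v$ provided by the linearization, together with the uniqueness identity $T_{A \circ g} = A \circ T_g$ (already established in the proof of Proposition \ref{coincidence}). The only preliminary observation I need is that $\|\Delta_v\|_v = 1$: indeed, since $\mathrm{id}_{\mathcal{G}_v^\infty(U)} \circ \Delta_v = \Delta_v$, uniqueness of the linearization forces $T_{\Delta_v} = \mathrm{id}$, and hence $\|\Delta_v\|_v = \|T_{\Delta_v}\| = 1$.

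For the direct implication, I would assume that $\mathcal{I}^{\mathcal{H}_v^\infty}$ has the LP in $\mathcal{A}$ and prove the equality of ideals by a double inclusion with matching norms. If $f \in \mathcal{I}^{\mathcal{H}_v^\infty}(U,F)$, then $T_f \in \mathcal{A}(\mathcal{G}_v^\infty(U), F)$ with $\|T_f\|_{\mathcal{A}} = \|f\|_{\mathcal{I}^{\mathcal{H}_v^\infty}}$, and the factorization $f = T_f \circ \Delta_v$ yields $f \in \mathcal{A} \circ \mathcal{H}_v^\infty(U,F)$ with $\|f\|_{\mathcal{A} \circ \mathcal{H}_v^\infty} \le \|T_f\|_{\mathcal{A}} \|\Delta_v\|_v = \|f\|_{\mathcal{I}^{\mathcal{H}_v^\infty}}$. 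Conversely, for $f \in \mathcal{A}\circ \mathcal{H}_v^\infty(U,F)$ with factorization $f = A \circ g$, the identity $T_f = A \circ T_g$ and the ideal property of $\mathcal{A}$ give $T_f \in \mathcal{A}(\mathcal{G}_v^\infty(U), F)$ with $\|T_f\|_{\mathcal{A}} \le \|A\|_{\mathcal{A}} \|g\|_v$; taking infimum over all factorizations and then invoking the LP yields $f \in \mathcal{I}^{\mathcal{H}_v^\infty}(U,F)$ with $\|f\|_{\mathcal{I}^{\mathcal{H}_v^\infty}} = \|T_f\|_{\mathcal{A}} \le \|f\|_{\mathcal{A}\circ \mathcal{H}_v^\infty}$.

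For the converse, I would assume $[\mathcal{I}^{\mathcal{H}_v^\infty}, \|\cdot\|_{\mathcal{I}^{\mathcal{H}_v^\infty}}] = [\mathcal{A}\circ \mathcal{H}_v^\infty, \|\cdot\|_{\mathcal{A}\circ \mathcal{H}_v^\infty}]$ and verify the LP for a given $f \in \mathcal{H}_v^\infty(U,F)$. If $T_f \in \mathcal{A}(\mathcal{G}_v^\infty(U),F)$, the factorization $f = T_f \circ \Delta_v$ shows $f \in \mathcal{A}\circ \mathcal{H}_v^\infty(U,F) = \mathcal{I}^{\mathcal{H}_v^\infty}(U,F)$ with $\|f\|_{\mathcal{I}^{\mathcal{H}_v^\infty}} \le \|T_f\|_{\mathcal{A}}$. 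Conversely, if $f \in \mathcal{I}^{\mathcal{H}_v^\infty}(U,F) = \mathcal{A}\circ \mathcal{H}_v^\infty(U,F)$, writing $f = A \circ g$ and using $T_f = A \circ T_g$ produces $T_f \in \mathcal{A}$ with $\|T_f\|_{\mathcal{A}} \le \|A\|_{\mathcal{A}}\|g\|_v$; taking infimum yields $\|T_f\|_{\mathcal{A}} \le \|f\|_{\mathcal{A}\circ \mathcal{H}_v^\infty} = \|f\|_{\mathcal{I}^{\mathcal{H}_v^\infty}}$.

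The two chains of inequalities sandwich together to give $\|T_f\|_{\mathcal{A}} = \|f\|_{\mathcal{I}^{\mathcal{H}_v^\infty}}$, which is precisely the norm equality demanded by the LP. The main technical point — and the only step where attention is really required — is ensuring the norms match exactly rather than just up to a constant; this is guaranteed by the observation $\|\Delta_v\|_v = 1$, which prevents any constant from sneaking into the inequality $\|f\|_{\mathcal{A}\circ \mathcal{H}_v^\infty} \le \|T_f\|_{\mathcal{A}} \|\Delta_v\|_v$. Everything else is a straightforward bookkeeping of the canonical factorization and the already-proved identity $T_{A\circ g} = A\circ T_g$.
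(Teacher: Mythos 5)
Your proposal is correct and follows essentially the same route as the paper: the forward direction via the factorization $f=T_f\circ \Delta_v$ and the identity $T_{A\circ g}=A\circ T_g$ is identical, and for the converse the paper simply cites \cite[Theorem 2.7]{CaJiKe} (which asserts that $\mathcal{A}\circ\mathcal{H}_v^\infty$ has the LP in $\mathcal{A}$), whereas you reprove that fact directly with the same computation. Your explicit observation that $\|\Delta_v\|_v=1$ is a harmless (and correct) addition that the paper uses implicitly.
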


\begin{proof} Assume first that $ \mathcal{I}^{\mathcal{H}%
_{v}^{\infty }}$ has the LP in $\mathcal{A}$.

Let $f\in \mathcal{I}^{\mathcal{H}%
_{v}^{\infty }}(U,F)$ and consider its decomposition $f=T_f\circ \Delta_v$. If we assume that  $ \mathcal{I}^{\mathcal{H}%
_{v}^{\infty }}$ has the LP in $\mathcal{A}$ then $T_f\in \mathcal A(\mathcal G_v^\infty(U),F)$. Hence, $f\in \mathcal A\circ \mathcal H_v^\infty(U,F)$. Besides,
$$
\|f\|_{\mathcal A \circ \mathcal H_v^\infty}\leq \|T_f\|_{\mathcal A}\|\Delta_v\|_v\leq \|f\|_{\mathcal I^{\mathcal H_v^\infty}}.
$$

Reciprocally, If $f \in \mathcal A\circ \mathcal H_v^\infty (U,F)$ then $f=A\circ g$ for some $A\in \mathcal A(G,F)$, some $g\in \mathcal H_v^\infty(U,G)$ and some Banach space $G$. By the ideal property, $A\circ T_g\in \mathcal A(\mathcal G_v^\infty(U),F)$ and by the uniqueness of the linearization $A\circ T_g=T_f$. Hence, $T_f\in \mathcal A(\mathcal G_v^\infty(U),F)$, and by the linearization property $f\in  \mathcal I^{\mathcal H_v^\infty}(U,F)$. Moreover, by the ideal property 
$$
\|f\|_{\mathcal I^{\mathcal H_v^\infty}}=\|T_f\|_{\mathcal A}=\|A\circ T_g\|_{\mathcal A}\leq \|A\|_{\mathcal A}\|T_g\|= \|A\|_{\mathcal A}\|g\|_v,
$$
and taking infimum on $A$ we get that $\|f\|_{\mathcal I^{\mathcal H_v^\infty}}\leq \|f\|_{\mathcal A\circ \mathcal H_v^\infty }$.

If we now assume that $\left[ \mathcal{I}^{\mathcal{H}%
_{v}^{\infty }},\left\Vert .\right\Vert _{\mathcal{I}^{\mathcal{H}%
_{v}^{\infty }}}\right] =\left[\mathcal{A}\circ \mathcal H_v^\infty, \|\cdot\|_{\mathcal{A}\circ \mathcal H_v^\infty}\right]$, then \cite[Theorem 2.7]{CaJiKe} gives the result.
\end{proof}

Using Propositions \ref{coincidence} and \ref{LinPro} it now easily follows the following result that shows the relation of the   left-hand quotients of weighted holomorphic mappings and the composition ideals.

\begin{theorem}
Let $\left[ \mathcal{A},\left\Vert \cdot\right\Vert _{\mathcal{A}}\right] $ be a
normed operator ideal and let $\left[ \mathcal{I}^{\mathcal{H}_{v}^{\infty
}},\left\Vert \cdot\right\Vert _{\mathcal{I}^{\mathcal{H}_{v}^{\infty }}}\right] 
$ be a normed  weighted holomorphic ideal that has the LP in $\mathcal A$. Then,
 $[\mathcal A\circ(\mathcal A^{-1}\circ\mathcal I^{\mathcal H_v^\infty}), \|\cdot\|_{ \mathcal A\circ(\mathcal A^{-1}\circ\mathcal I^{\mathcal H_v^\infty})}]= [\mathcal I^{\mathcal H_v^\infty}, \|\cdot\|_{\mathcal I^{\mathcal H_v^\infty}}]$.

\end{theorem}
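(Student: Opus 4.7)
The plan is to chain Propositions \ref{coincidence} and \ref{LinPro}: the LP hypothesis on $\mathcal{I}^{\mathcal{H}_v^\infty}$ feeds both results at once, so the claimed identity should fall out after two substitutions with no additional calculation.

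First, I would invoke Proposition \ref{coincidence} to collapse the inner left-hand quotient: the LP assumption yields
\begin{equation*}
[\mathcal{A}^{-1}\circ\mathcal{I}^{\mathcal{H}_v^\infty},\|\cdot\|_{\mathcal{A}^{-1}\circ\mathcal{I}^{\mathcal{H}_v^\infty}}] = [\mathcal{H}_v^\infty,\|\cdot\|_v],
\end{equation*}
i.e.\ both the underlying classes and the norms coincide. Substituting this into the outer composition should give
\begin{equation*}
[\mathcal{A}\circ(\mathcal{A}^{-1}\circ\mathcal{I}^{\mathcal{H}_v^\infty}),\|\cdot\|_{\mathcal{A}\circ(\mathcal{A}^{-1}\circ\mathcal{I}^{\mathcal{H}_v^\infty})}] = [\mathcal{A}\circ\mathcal{H}_v^\infty,\|\cdot\|_{\mathcal{A}\circ\mathcal{H}_v^\infty}],
\end{equation*}
since the composition norm is an infimum over factorizations $f=A\circ g$ with $A\in\mathcal{A}$ and $g$ ranging in the inner ideal, so identical sets and identical norms at the inner stage produce identical infima at the outer stage.

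Second, I would apply Proposition \ref{LinPro} to the same LP hypothesis to obtain $[\mathcal{I}^{\mathcal{H}_v^\infty},\|\cdot\|_{\mathcal{I}^{\mathcal{H}_v^\infty}}] = [\mathcal{A}\circ\mathcal{H}_v^\infty,\|\cdot\|_{\mathcal{A}\circ\mathcal{H}_v^\infty}]$, and concatenating with the previous display completes the proof. The only conceivable obstacle is purely bookkeeping: verifying that the norm equality (not merely set equality) at the inner stage transfers to the outer composition. This is immediate from the infimum definition of $\|\cdot\|_{\mathcal{A}\circ(-)}$, so no genuine work remains once Propositions \ref{coincidence} and \ref{LinPro} are in hand.
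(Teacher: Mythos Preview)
Your proposal is correct and follows precisely the route the paper indicates: the theorem is stated immediately after the sentence ``Using Propositions \ref{coincidence} and \ref{LinPro} it now easily follows the following result,'' and your two-step substitution (first collapsing the inner quotient via Proposition \ref{coincidence}, then identifying $\mathcal{A}\circ\mathcal{H}_v^\infty$ with $\mathcal{I}^{\mathcal{H}_v^\infty}$ via Proposition \ref{LinPro}) is exactly that. Your remark that the norm equality at the inner stage transfers to the outer composition via the infimum definition is the only point worth making explicit, and you have made it.
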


\begin{lemma}\label{asoc}
Let $\left[ \mathcal{A},\left\Vert .\right\Vert _{\mathcal{A}}%
\right] $ and $\left[ \mathcal{I},\left\Vert .\right\Vert _{\mathcal{I}}%
\right] $ be  normed operator ideals. Then, $[(\mathcal A^{-1}\circ \mathcal I)\circ \mathcal H_v^\infty, \| \cdot\|_{(\mathcal A^{-1}\circ \mathcal I)\circ \mathcal H_v^\infty}]=[\mathcal A^{-1}\circ (\mathcal I\circ \mathcal H_v^\infty), \| \cdot\|_{\mathcal A^{-1}\circ (\mathcal I\circ \mathcal H_v^\infty)}]$.
\end{lemma}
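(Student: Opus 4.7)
The plan is to prove the lemma by establishing both inclusions, with equality of the corresponding norms, where the reverse inclusion will rely on the linearization scheme $f=T_f\circ\Delta_v$ combined with Proposition~\ref{LinPro} applied to the composition ideal $\mathcal{I}\circ\mathcal{H}_v^\infty$.

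\medskip

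For the inclusion $[(\mathcal A^{-1}\circ\mathcal I)\circ\mathcal H_v^\infty]\leq[\mathcal A^{-1}\circ(\mathcal I\circ\mathcal H_v^\infty)]$, I would start with $f\in (\mathcal A^{-1}\circ\mathcal I)\circ\mathcal H_v^\infty(U,F)$ and write $f=T\circ g$ with $T\in \mathcal A^{-1}\circ\mathcal I(G,F)$ and $g\in\mathcal H_v^\infty(U,G)$. Given any complex Banach space $H$ and $A\in\mathcal A(F,H)$, the definition of the left-hand quotient of operator ideals yields $A\circ T\in\mathcal I(G,H)$, so $A\circ f=(A\circ T)\circ g\in \mathcal I\circ\mathcal H_v^\infty(U,H)$. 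Hence $f\in\mathcal A^{-1}\circ(\mathcal I\circ\mathcal H_v^\infty)(U,F)$. For the norm estimate, I would use
\begin{equation*}
\|A\circ f\|_{\mathcal I\circ\mathcal H_v^\infty}\leq \|A\circ T\|_{\mathcal I}\|g\|_v\leq \|A\|_{\mathcal A}\|T\|_{\mathcal A^{-1}\circ\mathcal I}\|g\|_v,
\end{equation*}
take the supremum over $A$ with $\|A\|_{\mathcal A}\leq 1$, and then the infimum over factorizations $f=T\circ g$.

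\medskip

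For the reverse inclusion $[\mathcal A^{-1}\circ(\mathcal I\circ\mathcal H_v^\infty)]\leq[(\mathcal A^{-1}\circ\mathcal I)\circ\mathcal H_v^\infty]$, let $f\in\mathcal A^{-1}\circ(\mathcal I\circ\mathcal H_v^\infty)(U,F)$ and consider the canonical linearization $f=T_f\circ\Delta_v$. The key point is to show $T_f\in\mathcal A^{-1}\circ\mathcal I(\mathcal G_v^\infty(U),F)$. Given $A\in\mathcal A(F,H)$, the hypothesis gives $A\circ f\in\mathcal I\circ\mathcal H_v^\infty(U,H)$; invoking Proposition~\ref{LinPro} with the composition ideal $\mathcal I\circ\mathcal H_v^\infty$ (which has the LP in $\mathcal I$ by that proposition itself, or by \cite[Theorem~2.7]{CaJiKe}), its linearization $T_{A\circ f}$ belongs to $\mathcal I(\mathcal G_v^\infty(U),H)$ with $\|T_{A\circ f}\|_{\mathcal I}=\|A\circ f\|_{\mathcal I\circ\mathcal H_v^\infty}$. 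Since $A\circ T_f\circ\Delta_v=A\circ f=T_{A\circ f}\circ\Delta_v$, the uniqueness of the linearization gives $A\circ T_f=T_{A\circ f}\in\mathcal I$. As $A$ was arbitrary, $T_f$ lies in the left-hand quotient of operator ideals, and so $f=T_f\circ\Delta_v$ belongs to $(\mathcal A^{-1}\circ\mathcal I)\circ\mathcal H_v^\infty(U,F)$.

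\medskip

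For the norm comparison in this direction I would argue that $\|A\circ T_f\|_{\mathcal I}=\|T_{A\circ f}\|_{\mathcal I}=\|A\circ f\|_{\mathcal I\circ\mathcal H_v^\infty}$, so taking the supremum over $\|A\|_{\mathcal A}\leq 1$ gives $\|T_f\|_{\mathcal A^{-1}\circ\mathcal I}\leq\|f\|_{\mathcal A^{-1}\circ(\mathcal I\circ\mathcal H_v^\infty)}$. Using the factorization $f=T_f\circ\Delta_v$ with $\|\Delta_v\|_v\leq 1$, one obtains $\|f\|_{(\mathcal A^{-1}\circ\mathcal I)\circ\mathcal H_v^\infty}\leq\|T_f\|_{\mathcal A^{-1}\circ\mathcal I}\|\Delta_v\|_v\leq\|f\|_{\mathcal A^{-1}\circ(\mathcal I\circ\mathcal H_v^\infty)}$, closing the equality.

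\medskip

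The main obstacle is the reverse direction: it requires identifying the linearization $T_f$ as an element of $\mathcal A^{-1}\circ\mathcal I$, and this hinges crucially on the fact that the composition ideal $\mathcal I\circ\mathcal H_v^\infty$ itself enjoys the LP in $\mathcal I$. Once this is invoked (via Proposition~\ref{LinPro}), the uniqueness $A\circ T_f=T_{A\circ f}$ quickly converts the non-linear hypothesis on $f$ into a linear statement about $T_f$, which is precisely what the left-hand quotient of operator ideals demands.
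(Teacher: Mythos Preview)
Your proof is correct and follows essentially the same route as the paper's: both hinge on the identity $T_{A\circ f}=A\circ T_f$ together with the linearization property of $\mathcal{I}\circ\mathcal{H}_v^\infty$ in $\mathcal{I}$ (via \cite[Theorem~2.7]{CaJiKe}, which is what Proposition~\ref{LinPro} encodes), and both compute the norms through the chain $\|A\circ f\|_{\mathcal{I}\circ\mathcal{H}_v^\infty}=\|T_{A\circ f}\|_{\mathcal{I}}=\|A\circ T_f\|_{\mathcal{I}}$. The only cosmetic difference is that the paper packages everything as a single chain of equivalences through $T_f$, whereas you split it into two inclusions and handle the forward one by a direct factorization argument without invoking linearization at all; the reverse direction and the norm equalities are effectively identical.
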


\begin{proof} Let $f\in \mathcal H_v^\infty(U,F)$.
By definition, $f$ belongs to $\mathcal A^{-1}\circ (\mathcal I\circ \mathcal H_v^\infty)(U,F)$ if  $B\circ f\in \mathcal I\circ \mathcal H_v^\infty(U;G)$ for any $B\in \mathcal A(F,G)$ and any Banach space $G$. By \cite[Theorem 2.7]{CaJiKe} this is equivalent to $T_{B\circ f}$ belonging to $\mathcal I(\mathcal G_v^\infty(U),G)$ for any $B\in \mathcal A(F,G)$ and any Banach space $G$. Since $T_{B\circ f}=B\circ T_f$, the above means that $T_f \in \mathcal A^{-1}\circ \mathcal I(\mathcal G_v^\infty(U),F)$ and, again by \cite[Theorem 2.7]{CaJiKe}, this is equivalent to saying that $f$ belongs to $(\mathcal A^{-1}\circ \mathcal I)\circ \mathcal H_v^\infty(U,F)$.

In this case, 
\begin{eqnarray*}
\|f\|_{(\mathcal A^{-1}\circ \mathcal I)\circ \mathcal H_v^\infty}&=& \|T_f\|_{\mathcal A^{-1}\circ \mathcal I}=\sup\{ \|A\circ T_f\|_{\mathcal I}: A\in \mathcal A, \|A\|_{\mathcal A}\leq 1\}\\
&=&\sup\{ \|T_{A\circ f}\|_{\mathcal I}: A\in \mathcal A, \|A\|_{\mathcal A}\leq 1\}\\
&=&\sup\{ \|A\circ f\|_{\mathcal I\circ \mathcal H_v^\infty}: A\in \mathcal A, \|A\|_{\mathcal A}\leq 1\}\\
&=& \|f\|_{\mathcal A^{-1}\circ (\mathcal I\circ \mathcal H_v^\infty)}\\
\end{eqnarray*}
\end{proof}

As a consequence of Proposition \ref{LinPro}  and  Lemma \ref{asoc} left-hand quotients of an operator ideal and a weighted holomorphic ideal
with the LP in a certain operator ideal can be interpreted as a composition
ideal. 

\begin{corollary}\label{com}
Let $\left[ \mathcal{A},\left\Vert .\right\Vert _{\mathcal{A}}\right] $ and $\left[ \mathcal{I},\left\Vert .\right\Vert _{%
\mathcal{I}}\right] $ be normed operator ideals, and let $\left[ \mathcal{I}^{%
\mathcal{H}_{v}^{\infty }},\left\Vert .\right\Vert _{\mathcal{I}^{\mathcal{H}%
_{v}^{\infty }}}\right] $ be a normed weighted holomorphic ideal with the LP in 
$\mathcal{I}$. Then%
\begin{equation*}
\left[ \mathcal{A}^{-1}\mathcal{\circ I}^{\mathcal{H}_{v}^{\infty
}},\left\Vert .\right\Vert _{\mathcal{A}^{-1}\mathcal{\circ I}^{\mathcal{H}%
_{v}^{\infty }}}\right] =\left[ \left( \mathcal{A}^{-1}\circ \mathcal{I}%
\right) \mathcal{\circ H}_{v}^{\infty },\left\Vert .\right\Vert _{\left( 
\mathcal{A}^{-1}\circ \mathcal{I}\right) \mathcal{\circ H}_{v}^{\infty }}%
\right] .
\end{equation*}
\end{corollary}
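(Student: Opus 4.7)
The plan is simply to chain Proposition \ref{LinPro} with Lemma \ref{asoc}. Since $\mathcal{I}^{\mathcal{H}_{v}^{\infty }}$ is assumed to have the LP in $\mathcal{I}$, Proposition \ref{LinPro} delivers the identification
$$\left[\mathcal{I}^{\mathcal{H}_{v}^{\infty }},\left\Vert \cdot \right\Vert _{\mathcal{I}^{\mathcal{H}_{v}^{\infty }}}\right] = \left[\mathcal{I}\circ \mathcal{H}_{v}^{\infty },\left\Vert \cdot \right\Vert _{\mathcal{I}\circ \mathcal{H}_{v}^{\infty }}\right]$$
as normed weighted holomorphic ideals. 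Substituting this into the left-hand quotient on the left-hand side of the claimed equality turns the problem into a purely formal associativity statement about quotients and compositions, which is exactly the content of Lemma \ref{asoc}.

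More concretely, first I would observe that Proposition \ref{inclusion} applied twice (one inequality in each direction) shows that whenever two normed weighted holomorphic ideals coincide isometrically, their $\mathcal{A}^{-1}$-quotients coincide isometrically as well. So the substitution above yields
$$\left[\mathcal{A}^{-1}\circ \mathcal{I}^{\mathcal{H}_{v}^{\infty }},\left\Vert \cdot \right\Vert _{\mathcal{A}^{-1}\circ \mathcal{I}^{\mathcal{H}_{v}^{\infty }}}\right] = \left[\mathcal{A}^{-1}\circ \left(\mathcal{I}\circ \mathcal{H}_{v}^{\infty }\right),\left\Vert \cdot \right\Vert _{\mathcal{A}^{-1}\circ \left(\mathcal{I}\circ \mathcal{H}_{v}^{\infty }\right)}\right].$$
Now I invoke Lemma \ref{asoc} with the two operator ideals $\mathcal{A}$ and $\mathcal{I}$ to rewrite the right-hand side as $\left[(\mathcal{A}^{-1}\circ \mathcal{I})\circ \mathcal{H}_{v}^{\infty },\left\Vert \cdot \right\Vert _{(\mathcal{A}^{-1}\circ \mathcal{I})\circ \mathcal{H}_{v}^{\infty }}\right]$, and composing the two equalities gives the corollary.

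The hard part is essentially nonexistent here: all the genuine work already sits in Proposition \ref{LinPro} (the LP-to-composition dictionary) and in Lemma \ref{asoc} (the associativity of quotients and compositions, proved through the linearization $T_{B\circ f}=B\circ T_f$). The only point that merits a brief verification is the stability of the quotient construction under isometric equality of weighted holomorphic ideals, but this is immediate from the definition of the quotient norm and from Proposition \ref{inclusion}. Consequently the proof reduces to one line: apply Proposition \ref{LinPro}, then apply Lemma \ref{asoc}.
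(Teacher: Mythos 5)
Your proposal is correct and follows exactly the route the paper intends: the corollary is stated there precisely as a consequence of Proposition \ref{LinPro} (LP gives $\mathcal{I}^{\mathcal{H}_{v}^{\infty }}=\mathcal{I}\circ \mathcal{H}_{v}^{\infty }$ isometrically) combined with the associativity Lemma \ref{asoc}. Your extra remark that isometric equality of weighted holomorphic ideals passes to their $\mathcal{A}^{-1}$-quotients (via Proposition \ref{inclusion} in both directions) is the right way to justify the substitution step, which the paper leaves implicit.
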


Corollary \ref{com} has as a consequence the following result that shows the relationship, given by linearization, between
weighted holomorphic quotients and left-hand quotients of operator ideals
for those weighted holomorphic ideals that have the LP. This result is analogous to the linearization relationship that occurs in composition weighted holomorphic ideals and shows how both, weighted holomorphic ideals given by the composition method and weighted holomorphic ideals given by the left-hand quotients, have deep  similarities. 

\begin{corollary}
\label{4}Let $\left[ \mathcal{A},\left\Vert .\right\Vert _{\mathcal{A}}%
\right] $ be a normed operator ideal, $\left[ \mathcal{I},\left\Vert
.\right\Vert _{\mathcal{I}}\right] $ a normed operator ideal, and $\left[ 
\mathcal{I}^{\mathcal{H}_{v}^{\infty }},\left\Vert .\right\Vert _{\mathcal{I}%
^{\mathcal{H}_{v}^{\infty }}}\right] $ a normed weighted holomorphic ideal
that satisfies the LP in $\mathcal{I}$. For every $f\in \mathcal{H}%
_{v}^{\infty }(U,F)$, the following statements are equivalent:

\begin{enumerate}
\item[(i)] $f\in \mathcal{A}^{-1}\mathcal{\circ I}^{\mathcal{H}_{v}^{\infty
}}\left( U,F\right) .$

\item[(ii)] $T_{f}\in \mathcal{A}^{-1}\mathcal{\circ I}\left( \mathcal{G}%
_{v}^{\infty }(U),F\right) .$
\end{enumerate}

Moreover, in this case
\begin{equation*}
\left\Vert f\right\Vert _{\mathcal{A}^{-1}\mathcal{\circ I}^{\mathcal{H}%
_{v}^{\infty }}}=\left\Vert T_{f}\right\Vert _{\mathcal{A}^{-1}\mathcal{%
\circ I}}.
\end{equation*}
\end{corollary}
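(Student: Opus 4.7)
The plan is to derive Corollary \ref{4} as a direct consequence of Corollary \ref{com} combined with the linearization characterization of composition weighted holomorphic ideals given in \cite[Theorem 2.7]{CaJiKe}, which the paper has invoked repeatedly in the preceding arguments.

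First, I would apply Corollary \ref{com}: since $\mathcal{I}^{\mathcal{H}_v^\infty}$ has the LP in $\mathcal{I}$, we obtain the identification
\begin{equation*}
\bigl[\mathcal{A}^{-1}\circ\mathcal{I}^{\mathcal{H}_v^\infty},\,\|\cdot\|_{\mathcal{A}^{-1}\circ\mathcal{I}^{\mathcal{H}_v^\infty}}\bigr] = \bigl[(\mathcal{A}^{-1}\circ\mathcal{I})\circ\mathcal{H}_v^\infty,\,\|\cdot\|_{(\mathcal{A}^{-1}\circ\mathcal{I})\circ\mathcal{H}_v^\infty}\bigr],
\end{equation*}
so that $f\in\mathcal{A}^{-1}\circ\mathcal{I}^{\mathcal{H}_v^\infty}(U,F)$ is equivalent to $f\in(\mathcal{A}^{-1}\circ\mathcal{I})\circ\mathcal{H}_v^\infty(U,F)$, with matching norms. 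Here $\mathcal{A}^{-1}\circ\mathcal{I}$ is a normed operator ideal in the classical sense (see the discussion after Definition \ref{Def1} and \cite{Pietsch1980}), so applying a composition-style construction to it is legitimate.

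Second, I would apply \cite[Theorem 2.7]{CaJiKe} to the operator ideal $\mathcal{A}^{-1}\circ\mathcal{I}$: a weighted holomorphic mapping $f\in\mathcal{H}_v^\infty(U,F)$ belongs to $(\mathcal{A}^{-1}\circ\mathcal{I})\circ\mathcal{H}_v^\infty(U,F)$ if and only if its linearization $T_f\in\mathcal{L}(\mathcal{G}_v^\infty(U),F)$ lies in $(\mathcal{A}^{-1}\circ\mathcal{I})(\mathcal{G}_v^\infty(U),F)$, and the norms agree:
\begin{equation*}
\|f\|_{(\mathcal{A}^{-1}\circ\mathcal{I})\circ\mathcal{H}_v^\infty}=\|T_f\|_{\mathcal{A}^{-1}\circ\mathcal{I}}.
\end{equation*}
Chaining these two equivalences yields (i) $\Leftrightarrow$ (ii) together with the desired norm identity.

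The proof is essentially bookkeeping rather than genuinely difficult; the main conceptual step (that left-hand quotients with an LP-ideal collapse to a composition ideal) has already been carried out in Corollary \ref{com}. The one thing worth double-checking as I write it out is that $\mathcal{A}^{-1}\circ\mathcal{I}$ really is a normed operator ideal so that \cite[Theorem 2.7]{CaJiKe} applies verbatim; this is standard but worth a one-line remark. As a sanity check I would also verify the identity directly in one direction: unfolding both sides shows
\begin{equation*}
\|f\|_{\mathcal{A}^{-1}\circ\mathcal{I}^{\mathcal{H}_v^\infty}}=\sup_{\|A\|_{\mathcal{A}}\le 1}\|A\circ f\|_{\mathcal{I}^{\mathcal{H}_v^\infty}}=\sup_{\|A\|_{\mathcal{A}}\le 1}\|T_{A\circ f}\|_{\mathcal{I}}=\sup_{\|A\|_{\mathcal{A}}\le 1}\|A\circ T_f\|_{\mathcal{I}}=\|T_f\|_{\mathcal{A}^{-1}\circ\mathcal{I}},
\end{equation*}
where the second equality uses the LP of $\mathcal{I}^{\mathcal{H}_v^\infty}$ in $\mathcal{I}$ and the third uses the uniqueness of linearization $T_{A\circ f}=A\circ T_f$. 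This direct computation both confirms the result and, if desired, could replace the two-step argument above.
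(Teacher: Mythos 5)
Your proposal is correct and follows essentially the same route as the paper: the paper's proof also consists of invoking Corollary \ref{com} to identify $\mathcal{A}^{-1}\circ\mathcal{I}^{\mathcal{H}_v^\infty}$ with $(\mathcal{A}^{-1}\circ\mathcal{I})\circ\mathcal{H}_v^\infty$ and then applying \cite[Theorem 2.7]{CaJiKe} to the operator ideal $\mathcal{A}^{-1}\circ\mathcal{I}$. Your closing direct computation is a nice cross-check (it is essentially the norm computation already carried out in Lemma \ref{asoc}), but it is not needed beyond the two-step argument.
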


\begin{proof}
By Corollary \ref{com} 
\begin{equation*}
\left[ \mathcal{A}^{-1}\mathcal{\circ I}^{\mathcal{H}_{v}^{\infty
}},\left\Vert .\right\Vert _{\mathcal{A}^{-1}\mathcal{\circ I}^{\mathcal{H}%
_{v}^{\infty }}}\right] =\left[ \left( \mathcal{A}^{-1}\circ \mathcal{I}%
\right) \mathcal{\circ H}_{v}^{\infty },\left\Vert .\right\Vert _{\left( 
\mathcal{A}^{-1}\circ \mathcal{I}\right) \mathcal{\circ H}_{v}^{\infty }}%
\right] .
\end{equation*}
Then, using \cite[Theorem 2.7]{CaJiKe}, $f\in \mathcal{A}^{-1}\mathcal{\circ I}^{\mathcal{H}_{v}^{\infty
}}\left( U,F\right)=  (\mathcal{A}^{-1}\mathcal{\circ I})\circ {\mathcal{H}_{v}^{\infty
}}\left( U,F\right)$ 
if, and only if, $T_{f}\in \mathcal{A}^{-1}\mathcal{\circ I}\left( \mathcal{G}%
_{v}^{\infty }(U),F\right) $ and
$$
\|f\|_{\mathcal A^{-1}\circ \mathcal I^{\mathcal H_v^\infty}}=\| f\|_{(\mathcal A^{-1}\circ \mathcal I)\circ \mathcal H_v^\infty}=\|T_f\|_{\mathcal A^{-1}\circ \mathcal I}.
$$
\end{proof}

Therefore, the  map $f\to T_{f}$ defines an isometric
isomorphism from $\left( \mathcal{A}^{-1}\mathcal{\circ I}^{\mathcal{H}%
_{v}^{\infty }}\left( U,F\right) ,\left\Vert .\right\Vert _{\mathcal{A}^{-1}%
\mathcal{\circ I}^{\mathcal{H}_{v}^{\infty }}}\right) $ onto $\left( 
\mathcal{A}^{-1}\mathcal{\circ I}\left( \mathcal{G}_{v}^{\infty
}(U),F\right) ,\left\Vert .\right\Vert _{\mathcal{A}^{-1}\mathcal{\circ I}%
}\right) .$

Remember that, whenever $\mathcal I= \mathcal K_p, \mathcal K_{wp}, \mathcal K_{up},  \overline{\mathcal F}, \mathcal N^p$, all the normed weighted holomorphic ideals $\mathcal H_{v \mathcal I }^\infty$ fulfill the LP in $\mathcal  I$, and then by Proposition \ref{coincidence} or Theorem \ref{caract} $\mathcal I^{-1}\circ \mathcal H_{v\mathcal I}^\infty=\mathcal H_v^\infty$, that is the weighted holomorphic left-hand quotient  $\mathcal I^{-1}\circ \mathcal H_{v\mathcal I}^\infty$ coincides with the whole $\mathcal H_v^\infty$ as we pointed out in the former examples. 

In the next section we will see non trivial examples were  coincidence with the whole ideal $\mathcal H_v^\infty$ does not hold. 

\section{Examples of weighted holomorphic left-hand quotient ideals}\label{sec}

This section explores two instances of weighted holomorphic left-hand
quotient ideals, derived from considering the ideals of weighted holomorphic
mappings with Grothendieck and Rosenthal weighted ranges.  

Let us recall some basic definitions.
A subset $A$ of $E$ is said to be
conditionally weakly compact (or Rosenthal) if every sequence in $A$ has a
weak Cauchy subsequence. A subset $
K\subseteq E$ is called Grothendieck if, for every operator $T\in \mathcal{L}
\left( E,c_{0}\right) $, its image $T\left( K\right) $ is a relatively
weakly compact subset of $c_{0}$ (see e.g. \cite[p. 298]{Gonz2021}). 

An operator $T\in \mathcal{L}\left( E,F\right) $ is classified as separable,
Rosenthal, or Grothendieck based on whether the image of the unit ball $%
T\left( B_{E}\right) $ forms a 
separable, Rosenthal, or Grothendieck subset of $F$, respectively. The
corresponding operator ideals are denoted as  $\mathcal{S} $ for separable bounded operators, $%
\mathcal{R}$ for Rosenthal operators, and $\mathcal{O} $ for Grothendieck operators. It is well known that $\mathcal{K} \subseteq \mathcal{K}_w\subseteq \mathcal{R} $, $\mathcal{K}_w\subseteq \mathcal{O} $, and $\mathcal{K}\subseteq \mathcal{S}.$ 

An operator $T\in \mathcal{L}%
\left( E,F\right) $ is called completely continuous if it maps  weakly
convergent sequences  to  norm-convergent sequences. Let $\mathcal{V}$ denote the closed operator ideal of all
completely continuous operators from $E$ to $F$ (see \cite[1.6.2
and 4.2.5]{Pietsch1980}). 

For an extensive
analysis of these operator ideals, we refer to Pietsch's monograph \cite%
{Pietsch1980}.  It is well established that $\mathcal{O}$
constitutes a closed surjective operator ideal. For further insights into
the Grothendieck property, we direct the reader to \cite{Gonz2021}.

\begin{definition}
A mapping $f\in \mathcal{H}_{v}^{\infty }(U,F)$ is said to be Grothendieck
weighted holomorphic if  $(vf)\left( U\right) $ forms a Grothendieck
subset of $F$. We denote by $\mathcal{H}_{v\mathcal{O}}^{\infty }(U,F)$ the
space of all Grothendieck weighted holomorphic mappings from $U$ into $F.$

 A weighted holomorphic mapping $f\in \mathcal{H}_{v}^{\infty }(U,F)$ is called Rosenthal if $(vf)\left( U\right) $ is a Rosenthal subset of $F$. The class of all Rosenthal weighted holomorphic mappings is denoted by $\mathcal{H}_{v\mathcal{R}}^{\infty }$.
\end{definition}

We have seen in the previous section several examples of weighted holomorphic ideals that satisfy the
linearization property in certain operator ideals. Let us see that $\mathcal{H}_{v\mathcal{O}}^{\infty }$
 satisfies the linearization property in $\mathcal{O}$. The next theorem shows that  the mapping $f\mapsto T_{f}$ defines an isometric isomorphism
from $\left( \mathcal{H}_{v\mathcal O}^{\infty }(U,F),\left\Vert .\right\Vert
_{v}\right) $ onto $\left( \mathcal{O}(\mathcal{G}_{v}^{\infty
}(U),F),\left\Vert .\right\Vert \right) $, as well as from $\left( \mathcal{%
O\circ H}_{v}^{\infty }(U,F),\left\Vert .\right\Vert _{\mathcal{O\circ H}%
_{v}^{\infty }}\right) $ onto $\left( \mathcal{O}(\mathcal{G}_{v}^{\infty
}(U),F),\left\Vert .\right\Vert \right) .$

\begin{theorem}
\label{5}For a map $f\in \mathcal{H}_{v}^{\infty }(U,F)$, the following are
equivalent:

\begin{enumerate}
\item[(i)] $f\in \mathcal{H}_{v\mathcal{O}}^{\infty }(U,F).$

\item[(ii)] $T_{f}\in \mathcal{O}(\mathcal{G}_{v}^{\infty }(U),F).$

\item[(iii)] $f\in \mathcal{O\circ H}_{v}^{\infty }(U,F).$
\end{enumerate}

In this case, we have: 
\begin{equation*}
\left\Vert f\right\Vert _{v}=\left\Vert T_{f}\right\Vert =\left\Vert
f\right\Vert _{\mathcal{O\circ H}_{v}^{\infty }}.
\end{equation*}%

\end{theorem}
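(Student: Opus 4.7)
My plan is to run the cycle (ii) $\Rightarrow$ (iii) $\Rightarrow$ (i) $\Rightarrow$ (ii), tracking norms along the way. The equivalence (ii) $\Leftrightarrow$ (iii) is essentially formal and follows from \cite[Theorem 2.7]{CaJiKe} applied to $\mathcal{A}=\mathcal{O}$: the decomposition $f=T_{f}\circ \Delta_{v}$ with $\|\Delta_{v}\|_{v}=1$ yields (ii) $\Rightarrow$ (iii) together with $\|f\|_{\mathcal{O}\circ \mathcal{H}_{v}^{\infty}}\leq \|T_{f}\|$; conversely, any factorization $f=A\circ g$ with $A\in \mathcal{O}(G,F)$ and $g\in \mathcal{H}_{v}^{\infty}(U,G)$ forces $T_{f}=A\circ T_{g}$ by uniqueness of the linearization, so $T_{f}\in \mathcal{O}$ by the ideal property. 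The substantive content therefore sits in (i) $\Leftrightarrow$ (ii).

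The implication (ii) $\Rightarrow$ (i) is almost immediate. Since $\|\Delta_{v}\|_{v}=\|T_{\Delta_{v}}\|=1$, we have $v(x)\|\delta_{x}\|\leq 1$ for every $x\in U$, so $v(x)\delta_{x}\in B_{\mathcal{G}_{v}^{\infty}(U)}$. Applying $T_{f}$ gives
$$
(vf)(U)=\bigl\{T_{f}(v(x)\delta_{x}):x\in U\bigr\}\subseteq T_{f}\bigl(B_{\mathcal{G}_{v}^{\infty}(U)}\bigr),
$$
and since subsets of Grothendieck sets are Grothendieck, (i) follows. For the reverse implication (i) $\Rightarrow$ (ii) I would invoke the Mujica-type description of the closed unit ball
$$
B_{\mathcal{G}_{v}^{\infty}(U)}=\overline{\Gamma}\bigl\{v(x)\delta_{x}:x\in U\bigr\},
$$
standard for this predual (it follows by the bipolar theorem applied to the isometric duality $\mathcal{G}_{v}^{\infty}(U)^{*}=\mathcal{H}_{v}^{\infty}(U)$; see \cite{Ru,BaDe3}). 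By continuity and linearity of $T_{f}$,
$$
T_{f}\bigl(B_{\mathcal{G}_{v}^{\infty}(U)}\bigr)\subseteq \overline{\Gamma}\bigl((vf)(U)\bigr),
$$
so it suffices to check that the closed absolutely convex hull of a Grothendieck set is Grothendieck. For any $S\in \mathcal{L}(F,c_{0})$, $S((vf)(U))$ is relatively weakly compact in $c_{0}$, and by Krein's theorem $S(\overline{\Gamma}((vf)(U)))=\overline{\Gamma}(S((vf)(U)))$ is also relatively weakly compact; hence $T_{f}(B_{\mathcal{G}_{v}^{\infty}(U)})$ is Grothendieck and $T_{f}\in \mathcal{O}$.

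The norm identities then close up quickly: the linearization theorem recalled in the introduction gives $\|T_{f}\|=\|f\|_{v}$; the factorization $f=T_{f}\circ \Delta_{v}$ with $\|\Delta_{v}\|_{v}=1$ delivers $\|f\|_{\mathcal{O}\circ \mathcal{H}_{v}^{\infty}}\leq \|T_{f}\|$; and property (P1) for the composition ideal yields $\|f\|_{v}\leq \|f\|_{\mathcal{O}\circ \mathcal{H}_{v}^{\infty}}$, forcing all three quantities to coincide. The main obstacle is the step (i) $\Rightarrow$ (ii): one needs both the Mujica-type representation of $B_{\mathcal{G}_{v}^{\infty}(U)}$ as the norm-closed absolutely convex hull of $\{v(x)\delta_{x}\}$ and the stability of the Grothendieck property under such hulls via Krein's theorem. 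Both are standard but must be invoked carefully; if only a weak$^{*}$-closed hull description of $B_{\mathcal{G}_{v}^{\infty}(U)}$ were available, one would have to pass through $T_{f}^{**}$ and Goldstine's theorem, which is technically more delicate but conceptually identical.
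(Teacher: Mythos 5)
Your proposal is correct and follows essentially the same route as the paper: (ii)$\Leftrightarrow$(iii) via \cite[Theorem 2.7]{CaJiKe}, (ii)$\Rightarrow$(i) from $(vf)(U)\subseteq T_f(B_{\mathcal{G}_v^\infty(U)})$, and (i)$\Rightarrow$(ii) by combining the description of $B_{\mathcal{G}_v^\infty(U)}$ as the norm-closed absolutely convex hull of the point evaluations with the fact (via Krein's theorem) that such hulls preserve the Grothendieck property. The norm identities are obtained the same way, so there is nothing to add.
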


\begin{proof}
$($i$)\Rightarrow ($ii$)$: If $f\in \mathcal{H}_{v\mathcal{O}}^{\infty
}(U,F) $, for each $x\in U$ we have $T_{f}\left( \Delta _{v}\left( x\right) \right) =f\left(
x\right)$, and then $(v(T_f\circ \Delta_v)(U))$ is a Grothendieck set in $F$. Notice that the norm-closed absolutely
convex hull (denoted as $\overline{\text{abco}}$) of a Grothendieck set is itself Grothendieck due to the
norm-closed absolutely convex hull of a relatively weakly compact set is
relatively weakly compact. Therefore, $\overline{\text{abco}}\left(v
T_{f}\circ\Delta _{v}\left( U \right) \right) $ is Grothendieck in $%
F$. Since 
\begin{equation*}
T_{f}\left( B_{\mathcal{G}_{v}^{\infty }(U)}\right) =T_{f}\left( \overline{%
\text{abco}}\left( \Delta _{v}\left( U\right) \right) \right) \subseteq 
\overline{\text{abco}}\left( T_{f}\left( \Delta _{v}\left( U\right) \right)
\right)
\end{equation*}
it follows that $T_{f}$ is a Grothendieck linear operator.

$($ii$)\Rightarrow ($i$)$: If $T_{f}\in \mathcal{O}(\mathcal{G}_{v}^{\infty
}(U),F)$, then $T_{f}\left( B_{\mathcal{G}_{v}^{\infty }(U)}\right) $ is a
Grothendieck subset of $F$. Since $(vf)(U)\subseteq T_f(B_{%
\mathcal{G}_{v}^{\infty }(U)})$, it follows that $(vf)\left( U\right)
 $ is a Grothendieck set in $F$.

$($ii$)\Leftrightarrow ($iii$)$: This follows as a direct application of 
\cite[Theorem 2.7]{CaJiKe}.

The equality of the norms follows easily from the linearization properties and \cite[Theorem 2.7]%
{CaJiKe}.
\end{proof}

As a consequence of Proposition \ref{coincidence} we get:
\begin{corollary}
\begin{equation*}
\left[ \mathcal{O}^{-1}\circ \mathcal{H}_{v\mathcal O}^{\infty
},\left\Vert .\right\Vert _{\mathcal{O}^{-1}\mathcal{\circ I}^{\mathcal{H}%
_{v\mathcal O}^{\infty }}}\right] =\left[ \mathcal{H}_{v}^{\infty },\left\Vert
.\right\Vert _{v}\right]
\end{equation*}%
\end{corollary}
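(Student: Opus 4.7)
The plan is to invoke Proposition \ref{coincidence} directly, with $\mathcal{A} = \mathcal{O}$ and $\mathcal{I}^{\mathcal{H}_v^\infty} = \mathcal{H}_{v\mathcal{O}}^\infty$ endowed with the weighted supremum norm $\|\cdot\|_v$. The proposition requires two ingredients: that $\mathcal{A}$ be a Banach operator ideal, and that $\mathcal{I}^{\mathcal{H}_v^\infty}$ have the linearization property in $\mathcal{A}$. The first is standard and was already recalled in the opening of Section \ref{sec}: $\mathcal{O}$ is a closed (hence Banach) surjective operator ideal in $\mathcal{L}$, equipped with the operator norm.

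For the second ingredient, I would observe that Theorem \ref{5} already packages exactly what is needed. Its equivalence (i) $\Leftrightarrow$ (ii) states that $f \in \mathcal{H}_{v\mathcal{O}}^\infty(U,F)$ if and only if $T_f \in \mathcal{O}(\mathcal{G}_v^\infty(U),F)$, and the accompanying norm identity $\|f\|_v = \|T_f\|$ gives the isometric matching demanded by the definition of LP (recall that the norm on $\mathcal{O}$ is simply the operator norm). So $\mathcal{H}_{v\mathcal{O}}^\infty$ has the LP in $\mathcal{O}$ with no additional work required.

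With both hypotheses in place, the second part of Proposition \ref{coincidence} yields
\begin{equation*}
\left[\mathcal{O}^{-1}\circ \mathcal{H}_{v\mathcal{O}}^\infty,\|\cdot\|_{\mathcal{O}^{-1}\circ \mathcal{H}_{v\mathcal{O}}^\infty}\right] = \left[\mathcal{H}_v^\infty,\|\cdot\|_v\right],
\end{equation*}
which is the statement of the corollary. There is effectively no obstacle to overcome here: the substantive content has been absorbed into Theorem \ref{5} (where the Grothendieck-set-is-preserved-by-closed-absolutely-convex-hull argument was carried out) and into Proposition \ref{coincidence} (where the LP was shown to force the left-hand quotient to collapse onto $\mathcal{H}_v^\infty$). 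This is precisely why the paper introduces the result with \emph{``As a consequence of Proposition \ref{coincidence}''} — the corollary is a one-line formal deduction from the preceding two results, and no further verification (for instance, of the easy reverse inclusion $\mathcal{H}_v^\infty \subseteq \mathcal{O}^{-1}\circ \mathcal{H}_{v\mathcal{O}}^\infty$) is needed beyond citing Proposition \ref{coincidence}.
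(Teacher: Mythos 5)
Your proof is correct and follows exactly the paper's route: the paper states this corollary with the single phrase ``As a consequence of Proposition~\ref{coincidence}'', relying on Theorem~\ref{5} (equivalence of (i) and (ii) together with $\|f\|_v=\|T_f\|$) to supply the LP of $\mathcal{H}_{v\mathcal{O}}^{\infty}$ in the Banach ideal $\mathcal{O}$. Nothing further is needed.
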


As a consequence of Theorem \ref{5} and \cite[Corollary 2.5]{CaJiRu} we get that $\left[ \mathcal{H}_{v\mathcal{O}}^{\infty },\left\Vert
.\right\Vert _{v}\right] $ is a closed weighted holomorphic ideal.

The next result shows that Grothendieck weighted holomorphic mappings are characterized as those whose composition with separable bounded linear operators gives rise to weakly compact weighted holomorphic mappings.

\begin{theorem}
\label{7}$\left[ \mathcal{H}_{v\mathcal{O}}^{\infty },\left\Vert
.\right\Vert _{v}\right] =\left[ \mathcal S^{-1}\circ \mathcal{H}_{v\mathcal{K}_w%
}^{\infty },\left\Vert .\right\Vert _{\mathcal S^{-1}\circ \mathcal{H}_{v\mathcal{K}_w%
}^{\infty }}\right] .$
\end{theorem}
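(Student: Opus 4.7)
Linearize both sides of the claimed identity and reduce everything to the corresponding operator-ideal equality $\mathcal{O}=\mathcal{S}^{-1}\circ\mathcal{K}_w$.

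Recall from the discussion of the weakly $p$-compact example (with $p=\infty$, relying on \cite[Theorem~2.9]{CaJiKe}) that $\mathcal{H}_{v\mathcal{K}_w}^\infty$ satisfies the LP in $\mathcal{K}_w$. Corollary~\ref{4} applied with $\mathcal{A}=\mathcal{S}$ and $\mathcal{I}=\mathcal{K}_w$ then gives the isometric equivalence
\[
f\in\mathcal{S}^{-1}\circ\mathcal{H}_{v\mathcal{K}_w}^\infty(U,F)\iff T_f\in\mathcal{S}^{-1}\circ\mathcal{K}_w(\mathcal{G}_v^\infty(U),F),
\]
with $\|f\|_{\mathcal{S}^{-1}\circ\mathcal{H}_{v\mathcal{K}_w}^\infty}=\|T_f\|_{\mathcal{S}^{-1}\circ\mathcal{K}_w}$. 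Independently, Theorem~\ref{5} gives $f\in\mathcal{H}_{v\mathcal{O}}^\infty(U,F)$ iff $T_f\in\mathcal{O}(\mathcal{G}_v^\infty(U),F)$, with $\|f\|_v=\|T_f\|$. Thus the whole statement reduces to the isometric identity $\mathcal{O}(E,F)=\mathcal{S}^{-1}\circ\mathcal{K}_w(E,F)$ for every pair of Banach spaces.

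The inclusion $\mathcal{S}^{-1}\circ\mathcal{K}_w\subseteq\mathcal{O}$ is immediate by testing against $A\in\mathcal{L}(F,c_0)$: since $c_0$ is separable, $A\in\mathcal{S}(F,c_0)$, hence $A\circ T\in\mathcal{K}_w$ means $A(T(B_E))$ is relatively weakly compact in $c_0$, which is precisely the definition of $T(B_E)$ being Grothendieck. For the converse, let $T\in\mathcal{O}$ and $A\in\mathcal{S}(F,G)$, and set $Z:=\overline{A(F)}\subseteq G$, a separable closed subspace containing $A(T(B_E))$. For any $w^*$-null sequence $(g_n^*)\subset G^*$, transposition gives $A^*(g_n^*)\to 0$ in the $w^*$-topology of $F^*$; the Grothendieck property of $T(B_E)$ then yields
\[
\sup_{x\in B_E}|g_n^*(A(T(x)))|=\sup_{x\in B_E}|A^*(g_n^*)(T(x))|\longrightarrow 0.
\]
So $A(T(B_E))$ is a Grothendieck subset of the separable space $Z$, and the classical fact that Grothendieck sets in separable Banach spaces are relatively weakly compact gives $A\circ T\in\mathcal{K}_w$. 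Norms match because $\|T\|_{\mathcal{O}}=\|T\|$ and $\|T\|_{\mathcal{S}^{-1}\circ\mathcal{K}_w}=\|T\|$, the latter obtained by testing the supremum against rank-one functionals $\phi\in B_{F^*}\subset\mathcal{S}(F,\mathbb{C})$.

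The main obstacle is the auxiliary fact used above: a Grothendieck set $K$ in a separable Banach space $Z$ is relatively weakly compact. My intended argument uses the $w^*$-metrizability of $B_{Z^*}$: from any $(z_n)\subset K$, a diagonal extraction on a countable $w^*$-dense subset of $B_{Z^*}$ produces a weakly Cauchy subsequence $(z_{n_j})$; the associated $z^{**}\in Z^{**}$ defined by $z^{**}(\phi):=\lim_j\phi(z_{n_j})$ is $w^*$-sequentially continuous at zero on $B_{Z^*}$, since for any $w^*$-null $(\psi_n)\subset B_{Z^*}$ the Grothendieck hypothesis gives $|z^{**}(\psi_n)|\le\sup_j|\psi_n(z_{n_j})|\to 0$; by metrizability this upgrades to $w^*$-continuity on $B_{Z^*}$, hence $z^{**}\in Z$, so $(z_{n_j})$ actually converges weakly in $Z$.
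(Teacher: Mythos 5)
Your overall architecture coincides with the paper's: both proofs linearize the two sides (Theorem \ref{5} for $\mathcal{H}_{v\mathcal{O}}^{\infty}$, and Corollary \ref{4} together with the LP of $\mathcal{H}_{v\mathcal{K}_w}^{\infty}$ in $\mathcal{K}_w$ for the quotient side) and thereby reduce the statement to the operator-ideal identity $\mathcal{O}=\mathcal{S}^{-1}\circ\mathcal{K}_w$. The paper simply quotes this identity from \cite[Proposition 3.2.6]{Pietsch1980}; you attempt to prove it from scratch, and that is where your argument breaks.

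The gap: twice you assert that the Grothendieck property of a set forces \emph{uniform} convergence on that set of weak-star null sequences of functionals, namely $\sup_{x\in B_E}\left\vert A^{*}(g_n^{*})(Tx)\right\vert\to 0$ in the main argument and $\sup_{j}\left\vert \psi_n(z_{n_j})\right\vert\to 0$ in the auxiliary lemma. That uniform condition is the definition of a \emph{limited} set, which is strictly stronger than being a Grothendieck set. Concretely, $B_{\ell_\infty}$ is a Grothendieck set (as $\ell_\infty$ is a Grothendieck space), yet by the Josefson--Nissenzweig theorem there is a weak-star null sequence $(\psi_n)$ in $\ell_\infty^{*}$ with $\sup_{x\in B_{\ell_\infty}}\vert\psi_n(x)\vert=\Vert\psi_n\Vert=1$ for all $n$; the implication fails even inside a separable space, since $\{e_j:j\in\mathbb{N}\}\subset c_0$ is relatively weakly compact (hence Grothendieck) while the coordinate functionals $e_n^{*}\in\ell_1$ are weak-star null with $\sup_j\vert e_n^{*}(e_j)\vert=1$. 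So the inequality $\vert z^{**}(\psi_n)\vert\leq\sup_j\vert\psi_n(z_{n_j})\vert$ is true but useless: its right-hand side need not tend to zero, and this occurs exactly at the step that carries the whole weight of your proof. (A smaller, independent problem: diagonal extraction over a countable weak-star dense subset of $B_{Z^{*}}$ does not yield a weakly Cauchy subsequence, since pointwise convergence of a bounded sequence on a weak-star dense set of functionals does not propagate to all functionals.) The auxiliary fact you need --- a Grothendieck subset $K$ of a separable space $Z$ is relatively weakly compact --- is true, but the correct argument runs through the bidual without any uniform estimate: for a weak-star cluster point $z^{**}$ of $K$ in $Z^{**}$ and a weak-star null sequence $(\psi_n)\subset B_{Z^{*}}$, the operator $S\colon Z\to c_0$, $S(z)=(\psi_n(z))_n$, maps $K$ onto a relatively weakly compact subset of $c_0$, so $S^{**}(z^{**})=(z^{**}(\psi_n))_n$, being a weak-star cluster point of $S(K)$ in $\ell_\infty$, lies in $c_0$; metrizability of $(B_{Z^{*}},w^{*})$ and the Banach--Dieudonn\'e theorem then give $z^{**}\in Z$. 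Alternatively, do as the paper does and invoke \cite[Proposition 3.2.6]{Pietsch1980} directly.
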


\begin{proof}
Let $f\in \mathcal{H}_{v}^{\infty }(U,F)$. Considering Theorem \ref{5}, \cite%
[Proposition 3.2.6]{Pietsch1980}
and Corollary \ref{4}, we get the following%
\begin{eqnarray*}
f &\in &\mathcal{H}_{v\mathcal{O}}^{\infty }(U,F)\Leftrightarrow T_{f}\in 
\mathcal{O}(\mathcal{G}_{v}^{\infty }(U),F) \\
&\Leftrightarrow &A\circ T_{f}\in \mathcal{K}_w\left( \mathcal{G}_{v}^{\infty
}(U),G\right) ,\qquad \forall A\in \mathcal S\left( F,G\right) \\
&\Leftrightarrow &T_{f}\in \mathcal S^{-1}\circ \mathcal{K}_w\left( \mathcal{G}%
_{v}^{\infty }(U),F\right) \\
&\Leftrightarrow &f\in \mathcal S^{-1}\circ \mathcal{H}_{v\mathcal{K}_w}^{\infty
}\left( U,F\right).
\end{eqnarray*}

with the equalities of the norms $\left\Vert f\right\Vert _{v}=\left\Vert T_{f}\right\Vert
=\left\Vert T_{f}\right\Vert _{\mathcal S^{-1}\circ \mathcal{K}_w}=\left\Vert
f\right\Vert _{\mathcal S^{-1}\circ \mathcal{H}_{v\mathcal{K}_w}^{\infty }}.$
\end{proof}

Our last result describes the ideal of weighted holomorphic mappings with a
Rosenthal weighted range as a
weighted holomorphic left-hand quotient ideal generated by the operator
ideal $\mathcal{V}$ and the weighted holomorphic ideal $\mathcal{H}_{v%
\mathcal{K}}^{\infty }.$ 

We need to recall first that a linear operator $Q:F\to G$ between Banach spaces is $r$-integral ($1\leq r\leq \infty$)  if there are a probability measure $\mu$ and bounded linear operators $a:L_r(\mu)\to G^{**}$ and $b:F\to L_\infty(\mu)$ such that $k_G\circ Q=a\circ i_r\circ b$, where $i_r:L_\infty (\mu)\to L_r(\mu)$ is the formal identity and $k_G:G\to G^{**}$ is the canonical isometric embedding.

\begin{theorem}
$\left[ \mathcal{H}_{v\mathcal{R}}^{\infty },\left\Vert .\right\Vert _{v}%
\right] =\left[ \mathcal{V}^{-1}\circ \mathcal{H}_{v\mathcal{K}}^{\infty
},\left\Vert .\right\Vert _{\mathcal{V}^{-1}\circ \mathcal{H}_{v\mathcal{K}%
}^{\infty }}\right] .$
\end{theorem}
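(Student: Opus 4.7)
The plan is to mimic the proof of Theorem \ref{7} line by line, replacing the triple $(\mathcal{O},\mathcal{S},\mathcal{K}_{w})$ by $(\mathcal{R},\mathcal{V},\mathcal{K})$. That argument rested on three ingredients: (a) the linearization property of $\mathcal{H}_{v\mathcal{O}}^{\infty}$ in $\mathcal{O}$ (Theorem \ref{5}), (b) the classical operator-ideal identity $\mathcal{O}=\mathcal{S}^{-1}\circ \mathcal{K}_{w}$ due to Pietsch, and (c) Corollary \ref{4}. I would establish the Rosenthal analogs of each and then chain them together.

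First, I would prove the Rosenthal analog of Theorem \ref{5}: $f\in \mathcal{H}_{v\mathcal{R}}^{\infty}(U,F)$ if and only if $T_{f}\in \mathcal{R}(\mathcal{G}_{v}^{\infty}(U),F)$, with $\|f\|_{v}=\|T_{f}\|$. The implication $T_{f}\in \mathcal{R}\Rightarrow f\in \mathcal{H}_{v\mathcal{R}}^{\infty}$ is immediate from $(vf)(U)\subseteq T_{f}(B_{\mathcal{G}_{v}^{\infty}(U)})$ together with the fact that Rosenthal sets are hereditary. The converse follows the Grothendieck template via
\[
T_{f}(B_{\mathcal{G}_{v}^{\infty}(U)})=T_{f}(\overline{\text{abco}}(\Delta_{v}(U)))\subseteq \overline{\text{abco}}(T_{f}(\Delta_{v}(U))),
\]
and reduces to the statement that the closed absolutely convex hull of a Rosenthal (i.e., conditionally weakly compact) set is again Rosenthal, which is Haydon's theorem — this is the Rosenthal substitute for the Krein--\v{S}mulian step used in Theorem \ref{5}, and it is the main non-trivial step of the proof.

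Second, I would invoke the classical operator-ideal identity $\mathcal{R}=\mathcal{V}^{-1}\circ \mathcal{K}$: an operator $T:E\to F$ is Rosenthal if and only if $V\circ T$ is compact for every completely continuous $V:F\to G$ and every Banach space $G$, with agreement of the two corresponding norms. The easy direction is immediate, since a Rosenthal image contains weak Cauchy subsequences that completely continuous operators send to norm Cauchy ones, yielding relatively compact images. The converse uses Rosenthal's $\ell_{1}$-theorem to extract an $\ell_{1}$-sequence in $T(B_{E})$ whenever $T\notin \mathcal{R}$ and then constructs a completely continuous witness $V$ (routed through a Schur space) for which $VT$ fails to be compact; this is the natural place where the $r$-integral apparatus recalled just before the theorem enters, used to produce such a $V$ via Pietsch-type factorization.

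Finally, since $\mathcal{H}_{v\mathcal{K}}^{\infty}$ has the LP in $\mathcal{K}$ (the case $p=\infty$ of the $\mathcal{K}_{p}$ example), Corollary \ref{4} applied with $\mathcal{A}=\mathcal{V}$ and $\mathcal{I}=\mathcal{K}$ gives the isometric equivalence $f\in \mathcal{V}^{-1}\circ \mathcal{H}_{v\mathcal{K}}^{\infty}(U,F)\iff T_{f}\in \mathcal{V}^{-1}\circ \mathcal{K}(\mathcal{G}_{v}^{\infty}(U),F)$. Concatenating the three equivalences one gets
\[
f\in \mathcal{H}_{v\mathcal{R}}^{\infty}(U,F)\iff T_{f}\in \mathcal{R}(\mathcal{G}_{v}^{\infty}(U),F)\iff T_{f}\in \mathcal{V}^{-1}\circ \mathcal{K}(\mathcal{G}_{v}^{\infty}(U),F)\iff f\in \mathcal{V}^{-1}\circ \mathcal{H}_{v\mathcal{K}}^{\infty}(U,F),
\]
together with $\|f\|_{v}=\|T_{f}\|=\|T_{f}\|_{\mathcal{V}^{-1}\circ \mathcal{K}}=\|f\|_{\mathcal{V}^{-1}\circ \mathcal{H}_{v\mathcal{K}}^{\infty}}$, yielding both the set and the norm equality in the statement.
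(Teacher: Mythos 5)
Your proposal is correct, but it takes a genuinely different route from the paper. The paper does not linearize: it proves both inclusions directly at the level of the weighted range $(vf)(U)$. The forward inclusion is a two-line sequential argument (weak Cauchy subsequences of $((vf)(x_n))_n$ are sent to norm-convergent sequences by any completely continuous $T$, so $T\circ f\in\mathcal{H}_{v\mathcal{K}}^{\infty}$), and the reverse inclusion is by contradiction: if $(vf)(U)$ is not Rosenthal, Rosenthal's $\ell_1$-theorem produces a copy of the $\ell_1$-basis inside it via an injection $J:\ell_1\to F$, and the completely continuous witness is obtained by extending an $r$-integral surjection $Q_0:\ell_1\to\ell_2$ to $Q:F\to\ell_2$ with $Q_0=Q\circ J$, forcing $Q_0$ to be compact --- absurd. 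Your route instead mirrors Theorem \ref{7}: establish the LP of $\mathcal{H}_{v\mathcal{R}}^{\infty}$ in $\mathcal{R}$, quote the linear identity $\mathcal{R}=\mathcal{V}^{-1}\circ\mathcal{K}$, and chain through Corollary \ref{4}. This is structurally cleaner and makes the Rosenthal case formally parallel to the Grothendieck one, but it costs you two extra inputs: (1) the Krein--\v{S}mulian-type theorem that the closed absolutely convex hull of a conditionally weakly compact set is conditionally weakly compact --- true, but a nontrivial result that you must cite or prove, and which the paper's direct argument avoids entirely since it never needs $T_f(B_{\mathcal{G}_v^{\infty}(U)})$ to be Rosenthal, only $(vf)(U)$; and (2) the linear quotient formula $\mathcal{R}=\mathcal{V}^{-1}\circ\mathcal{K}$, whose hard half is proved by exactly the same $\ell_1$ plus $r$-integral-extension construction that the paper carries out on the holomorphic range, so no work is actually saved there. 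Both arguments deliver the isometry, yours via $\|f\|_v=\|T_f\|=\|T_f\|_{\mathcal{V}^{-1}\circ\mathcal{K}}=\|f\|_{\mathcal{V}^{-1}\circ\mathcal{H}_{v\mathcal{K}}^{\infty}}$ and the paper's via a direct estimate combined with Proposition \ref{coincidence}. One small imprecision: the witness $Q$ is not ``routed through a Schur space'' ($\ell_2$ is the codomain in the standard construction); complete continuity of $Q$ comes from its being $r$-integral, not from a Schur property of the range.
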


\begin{proof}
Given $f\in \mathcal{H}_{v\mathcal R}^{\infty }(U,F)$,  every sequence $((vf)(x_n))_n$ with $(x_n)_n$ in $U$ admits a weakly Cauchy subsequence $((vf)(x_{n_k}))_k$. Then, for every Banach space $G$ and every linear operator $T\in\mathcal V(F,G)$ the sequence $(T(vf)(x_{n_k}))_k$ is norm convergent (\cite[Proposition 1.6.3]{Pietsch1980}). 

Besides, 
$$
\|v(x)(Tf(x))\|=\|T(v(x)f(x))\|\leq \|T\| \ \|f\|_v.
$$
Hence, $Tf\in \mathcal H_{v\mathcal K}^\infty(U,G)$. Thus, $f\in \mathcal V^{-1}\circ \mathcal H_{v\mathcal K}^\infty(U,F)$.

Conversely, let $f\in \mathcal V^{-1} \circ \mathcal H_{v\mathcal K}^\infty(U,F)$ and let us suppose that $f\notin \mathcal{H}_{v\mathcal R}^{\infty }(U,F)$. Then, there exists a sequence $(x_n)_n$ in $U$ such that $((vf)(x_n))_n$ does not contain any subsequence which is weakly Cauchy. As the sequence $((vf)(x_n))_n$ is bounded, by the well known Rosenthal theorem we can find a subsequence $((vf)(x_{n_k}))_k$ and a linear  injection $J:\ell_1\to E$ such that $(vf)(x_{n_k})=J(e_k)$ for all $k$, where $(e_k)_k$  is the canonical basis of $\ell_1$. According to the proof of \cite[Theorem 28.5.8]{Pietsch1980} if $Q_0:\ell_1\to \ell_2$ is any linear surjection then, $Q_0$ is an $r$-integral linear operator that can be extended to an $r$-integral linear operator $Q:F\to\ell_2$ such that $Q_0=Q\circ J$. Then $Q_0(e_k)=Q\circ (vf)(x_{n_k})_k$ for all $k$.

As any $r$-integral linear operator is completely continuous, we have that $Q\in \mathcal V$ and then, by assumption on $f$,  $Q\circ f\in \mathcal H_{v\mathcal K}^\infty(U,\ell_2)$. Since
$$
Q_0(B_{\ell_1})\subset \overline{abco}(Q_0(e_k))=\overline{abco}(Q\circ J(e_k))=\overline{abco}(Q\circ(vf)(x_{n_k}))\subset \overline{abco}(Q\circ(vf)(U))
$$
we conclude that $Q_0$ is a compact linear operator, which is a contradiction.

Moreover, 
\begin{eqnarray*}
\|f\|_{\mathcal V^{-1}\circ \mathcal H_{v\mathcal K}^\infty}&=& \sup\{\|T\circ f\|_v:T\in \mathcal V(F,G), \|T\|\leq 1, G \mbox{ Banach }\}\\
&\leq & \sup\{\|f\|_v\|T\|: T\in \mathcal V(F,G), \|T\|\leq 1, G \mbox{ Banach }\}\\
&\leq & \|f\|_v
\end{eqnarray*}
and by Proposition \ref{coincidence} we get the equality of the norms.

\end{proof}

\end{document}